\title{An embedding theorem for Hilbert categories}
\author{Chris Heunen}
\newcommand{\after}{\circ}
\newcommand{\cat}[1]{\ensuremath{\mathbf{#1}}}
\newcommand{\Cat}[1]{\ensuremath{\mathbf{#1}}}
\newcommand{\id}[1][]{\ensuremath{\mathrm{id}_{#1}}}
\newcommand{\I}{\ensuremath{I}}
\newcommand{\op}{\ensuremath{^{\mathrm{op}}}}
\newcommand{\field}[1]{\ensuremath{\mathbb{#1}}}
\newcommand{\inprod}[2]{\ensuremath{\langle #1\,|\,#2 \rangle}}
\newcommand{\tensor}{\ensuremath{\otimes}}
\newcommand{\ie}{\textit{i.e.}~}
\newcommand{\eg}{\textit{e.g.}~}
\newcommand{\coker}{\ensuremath{\mathrm{coker}}}
\renewcommand{\Im}{\ensuremath{\mathrm{Im}}}
\newcommand{\Sub}{\ensuremath{\mathrm{Sub}}}
\newcommand{\SMod}[1][]{\ensuremath{\Cat{SMod}_{#1}}}
\newcommand{\HMod}[1][]{\ensuremath{\Cat{HMod}_{#1}}}
\newcommand{\sHMod}[1][]{\ensuremath{\Cat{sHMod}_{#1}}}
\newcommand{\Hilb}{\Cat{Hilb}}
\newcommand{\preHilb}[1][]{\Cat{preHilb}_{#1}}
\newtheorem{theorem}{Theorem}
\newtheorem{lemma}{Lemma}
\newtheorem{proposition}{Proposition}
\newtheorem{corollary}{Corollary}
\newtheorem{definition}{Definition}
\newenvironment{proof}[1][Proof]%
{ \begin{trivlist}%
  \item[\hskip \labelsep {\bfseries #1}]%
}%
{ \end{trivlist}%
}
\newcommand{\qed}{\nobreak\hfill$\Box$}
\begin{document}
\maketitle

\begin{abstract}
  We axiomatically define (pre-)Hilbert categories. The axioms
  resemble those for monoidal Abelian categories with the addition of
  an involutive functor. We then prove embedding theorems: any locally
  small pre-Hilbert category whose monoidal unit is a simple generator
  embeds (weakly) monoidally into the category of pre-Hilbert spaces and
  adjointable maps, preserving adjoint morphisms and all finite
  (co)limits. An intermediate result that is important in its own
  right is that the scalars in such a category necessarily form an
  involutive field. In case of a Hilbert category, the embedding
  extends to the category of Hilbert spaces and continuous linear maps.
  The axioms for (pre-)Hilbert categories are weaker than the axioms found
  in other approaches to axiomatizing 2-Hilbert spaces. Neither enrichment
  nor a complex base field is presupposed. A comparison to other
  approaches will be made in the introduction. 
\end{abstract}

\section{Introduction}
\label{sec:introduction}

Modules over a ring are fundamental to algebra. 
Distilling their categorical properties results in the definition of
Abelian categories, which play a prominent part in algebraic geometry,
cohomology and pure category theory. The prototypical Abelian category
is that of modules over a fixed ring. Indeed, Mitchell's famous
embedding theorem states that any small Abelian category embeds into
the category of modules over some ring~\cite{mitchell:categories,
freyd:abelian}.  

Likewise, the category $\Hilb$ of (complex) Hilbert spaces and
continuous linear transformations is of paramount importance in
quantum theory and functional analysis. So is the category $\preHilb$
of (complex) pre-Hilbert spaces and adjointable maps. Although they
closely resemble the category of modules (over the complex field),
neither $\Hilb$ nor $\preHilb$ is Abelian. At the heart of the
failure of $\Hilb$ and $\preHilb$ to be Abelian is the existence of a
functor providing adjoint morphisms, called a dagger, that witnesses
self-duality. Hence the proof method of Mitchell's embedding theorem
does not apply.   

This article evens the situation, by combining ideas from Abelian
categories and dagger categories. The latter have been used fruitfully
in modeling aspects of quantum physics
recently~\cite{abramskycoecke:quantum, selinger:completelypositive,
selinger:daggeridempotents}. 
We axiomatically define \emph{\mbox{(pre-)Hilbert} categories}. The axioms
closely resemble those of a monoidal Abelian category, with the
addition of a dagger. Their names are justified by proving appropriate
embedding theorems: roughly speaking, pre-Hilbert categories embed
into $\preHilb$, and Hilbert categories embed into $\Hilb$. These
embeddings are in general not full, and only weakly monoidal. But
otherwise they preserve all the structure of pre-Hilbert categories,
including all finite (co)limits, and adjoint morphisms (up to an
isomorphism of the induced base field).

To sketch the historical context of these embedding theorems, let us
start by recalling that a category is called Abelian when: 
\begin{enumerate}
  \item it has finite biproducts;
  \item it has (finite) equalisers and coequalisers;
  \item every monomorphism is a kernel, and every epimorphism is a cokernel.
\end{enumerate}
We can point out already that Definition~\ref{def:hilblikecategory}
below, of pre-Hilbert category, is remarkably similar, 
except for the occurence of a dagger.
From the above axioms, enrichment over Abelian groups follows.
For the Abelian embedding theorem, there are (at least) two
`different' proofs, one by Mitchell~\cite{mitchell:categories}, and
one by Lubkin~\cite{lubkin}. Both operate by first embedding into 
the category $\Cat{Ab}$ of Abelian groups, and then adding a scalar
multiplication. This approach can be extended to also take tensor
products into account~\cite{hai:monoidalabelian}. However, as
$\Cat{Ab}$ is not a self-dual category, this strategy does not extend
straightforwardly to the setting of Hilbert spaces. 

Several authors have used an involution on the given category 
in this context before. Specifically, by a \emph{dagger} on a category
$\cat{C}$ we mean a functor $\dag \colon \cat{C}\op \to \cat{C}$ that
satisfies $X^\dag=X$ on objects and $f^{\dag\dag}=f$ on morphisms.
For example, \cite[Proposition~1.14]{ghezlimaroberts:wstarcategories} 
proves that any \emph{C*-category} embeds into $\Cat{Hilb}$.
Here, a C*-category is a category such that:
\begin{enumerate}
  \item it is enriched over complex Banach spaces and linear
    contractions; 
  \item it has an antilinear dagger;
  \item every $f \colon X \to Y$ satisfies $f^\dag f=0 \Rightarrow
    f=0$, \\ and there is a $g \colon X \to X$
    with $f^\dag f = g^\dag g$;
  \item $\|f\|^2 = \|f^\dag f\|$ for every morphism $f$.
\end{enumerate}
The embedding of a C*-category into $\Cat{Hilb}$ uses powerful
analytical methods, as it is basically an extension of the
Gelfand-Naimark theorem~\cite{gelfandnaimark} showing that every
C*-algebra (\ie one-object C*-category) can be realized concretely
as an algebra of operators on a Hilbert space. Compare the previous
definition to Definition~\ref{def:hilblikecategory} below: the axioms
of (pre-)Hilbert categories are much weaker. For example, nothing about
the base field is built into the definition. In fact, one of our main
results derives the fact that the base semiring is a field. For the
same reason, our situation also differs from \emph{Tannakian
  categories}~\cite{deligne:tannakiancategories}, that are otherwise
somewhat similar to our \mbox{(pre-)Hilbert} categories.  
Moreover, (pre-)Hilbert categories do not presuppose any enrichment, but
derive it from prior principles. 

A related embedding theorem is~\cite{doplicherroberts:duality} (see
also \cite{mueger:duality} for a categorical account). It
characterizes categories that are equivalent to the category 
of finite-dimensional unitary representations of a uniquely determined
compact supergroup. Without explaining the postulates, let us mention
that the categories $\cat{C}$ considered:
\begin{enumerate}
  \item are enriched over complex vector spaces;
  \item have an antilinear dagger;
  \item have finite biproducts;
  \item have tensor products $(\I,\tensor)$;
  \item satisfy $\cat{C}(\I,\I) \cong \field{C}$;
  \item every projection dagger splits;
  \item every object is compact.
\end{enumerate}
Our definition of (pre-)Hilbert category also requires
2,3, and 4 above. Furthermore, we will also use an analogue of 5,
namely that $\I$ is a \emph{simple generator}. But notice, again, that 1
above presupposes a base field $\field{C}$, and enrichment over
complex vector spaces, whereas (pre-)Hilbert categories do not. 
As will become clear, our definition and theorems function regardless
of dimension; we will come back to dimensionality and the compact
objects in 7 above in Subsection~\ref{subsec:dimension}.   

This is taken a step further by \cite{baez:twohilbertspaces}, which
follows the ``categorification'' programme originating in homotopy
theory~\cite{kapranovvoevodsky}. A \emph{2-Hilbert space} is a
category that:
\begin{enumerate}
  \item is enriched over $\Cat{Hilb}$;
  \item has an antilinear dagger;
  \item is Abelian;
\end{enumerate}
The category $\Cat{2Hilb}$ of 2-Hilbert spaces turns out to be
monoidal. Hence it makes sense to define a \emph{symmetric
2-H*-algebra} as a commutative monoid in $\Cat{2Hilb}$, in which
furthermore every object is compact. Then,
\cite{baez:twohilbertspaces} proves that every symmetric 2-H*-algebra
is equivalent to a category of continuous unitary finite-dimensional
representations of some compact supergroupoid. Again, the proof is
basically a categorification of the Gelfand-Naimark theorem.
Although the motivation for 2-Hilbert spaces is a categorification of
a single Hilbert space, they resemble our (pre-)Hilbert categories,
that could be seen as a characterisation of the category of all
Hilbert spaces. However, there are important differences. First of
all, axiom 1 above again presupposes both the complex numbers as a
base field, and a nontrivial enrichment. For example, as (pre-)Hilbert
categories assume no enrichment, we do not have to consider coherence
with conjugation. Moreover, \cite{baez:twohilbertspaces} considers
only finite dimensions, whereas the category of all Hilbert spaces, 
regardless of dimension, is a prime example of a (pre-)Hilbert
category (see also Subsection~\ref{subsec:dimension}). Finally, a
2-Hilbert space is an Abelian category, whereas a (pre-)Hilbert
category need not be (see Appendix~\ref{sec:thecategoryHilb}). 

Having sketched how the present work differs from existing work,
let us end this introduction by making our approach a bit more precise
while describing the structure of this
paper. Section~\ref{sec:hilbertcategories} introduces our 
axiomatisation. We then embark on proving embedding theorems for
such categories $\cat{H}$, under the assumption that the monoidal unit
$\I$ is a generator. First, we establish a functor $\cat{H} \to
\sHMod[S]$, embedding 
$\cat{H}$ into the category of strict \emph{Hilbert semimodules} over
the involutive semiring
$S=\cat{H}(\I,\I)$. Section~\ref{sec:Hilbertsemimodules} deals with this
rigorously. This extends previous work, that shows that a category
$\cat{H}$ with just biproducts and tensor products is enriched over
$S$-semimodules~\cite{heunen:semimoduleenrichment}.  
If moreover $\I$ is simple, Section~\ref{sec:scalarfield} 
proves that $S$ is an involutive field of characteristic
zero. This is an improvement over \cite{vicary:complexnumbers}, on which
Section~\ref{sec:scalarfield} draws for inspiration. Hence
$\sHMod[S]=\preHilb[S]$, and $S$ embeds into a field
isomorphic to the complex numbers. \emph{Extension of scalars} gives
an embedding $\preHilb[S] \to \preHilb[\field{C}]$, discussed in
Section~\ref{sec:Extensionofscalars}.  
Finally, when $\cat{H}$ is a Hilbert category,
Section~\ref{sec:completion} shows that Cauchy completion induces an
embedding into $\Hilb$ of the image of $\cat{H}$ in
$\preHilb$. Composing these functors then gives an embedding $\cat{H}
\to \Hilb$. Along the way, we also discuss how a great deal of the
structure of $\cat{H}$ is preserved under this embedding: in addition to
being (weakly) monoidal, the embedding preserves all finite limits and
colimits, and preserves adjoint morphisms up to an isomorphism of the
complex field.  Section~\ref{sec:conclusion} concludes the main body
of the paper, and Appendix~\ref{sec:thecategoryHilb} considers
relevant aspects of the category $\Hilb$ itself.

\section{(Pre-)Hilbert categories}
\label{sec:hilbertcategories}

This section introduces the object of study. Let $\cat{H}$ be a category.
A functor $\dag\colon \cat{H}\op \to \cat{H}$ with $X^\dag=X$ on objects and 
$f^{\dag\dag}=f$ on morphisms is called a \emph{dagger}; the
pair $(\cat{H}, \dag)$ is then called a \emph{dagger category}. Such
categories are automatically isomorphic to their opposite.
We can consider coherence of the dagger with respect to all
sorts of structures. For example, a morphism $m$ in such a category
that satisfies $m^\dag m = \id$ is 
called a \emph{dagger mono(morphism)} and is denoted $\xymatrix@1{\ar@{ |>->}[r] &
}$. Likewise, $e$ is a \emph{dagger epi(morphism)}, denoted
$\xymatrix@1{\ar@{-|>}[r] & }$, when $ee^\dag=\id$. A morphism is
called a \emph{dagger isomorphism} when it is both dagger epic and
dagger monic. Similarly, a biproduct
on such a category is called a \emph{dagger biproduct} when
$\pi^\dag = \kappa$, where $\pi$ is a projection and $\kappa$ an
injection. This is equivalent to demanding $(f \oplus g)^\dag = f^\dag
\oplus g^\dag$.
Also, an equaliser is called a \emph{dagger equaliser} when
it can be represented by a dagger mono, and a kernel is called a
\emph{dagger kernel} when it can be represented by a dagger
mono. Finally, a dagger category $\cat{H}$ is 
called \emph{dagger monoidal} when it is 
equipped with monoidal structure $(\tensor,\I)$ that is compatible with
the dagger, in the sense that $(f \tensor g)^\dag = f^\dag
\tensor g^\dag$, and the coherence isomorphisms are
dagger isomorphisms. 

\begin{definition}
\label{def:hilblikecategory}
  A category is called a \emph{pre-Hilbert category} when:
  \begin{itemize}
    \item it has a dagger;
    \item it has finite dagger biproducts;
    \item it has (finite) dagger equalisers;
    \item every dagger mono is a dagger kernel; 
    \item it is symmetric dagger monoidal.
  \end{itemize}
\end{definition}

Notice that no enrichment of any kind is assumed. Instead, it will
follow. Also, no mention is made of the complex numbers or any other
base field. This is a notable difference with other approaches
mentioned in the Introduction.

Our main theorem will assume that the monoidal unit $\I$ is a
\emph{generator}, \ie that $f=g\colon X \to Y$ when $fx=gx$ for all $x\colon \I
\to X$. A final condition we will use is the following: the monoidal
unit $\I$ is called \emph{simple} when $\Sub(\I) = \{0,\I\}$ and
$\cat{H}(\I,\I)$ is at most of continuum cardinality. 
Intuitively, a simple object $\I$ can be thought of as being
``one-dimensional''. The definition of a simple object in abstract
algebra is usually given without the size requirement, which we
require to ensure that the induced base field is not too large. With an
eye toward future generalisation, this paper postpones assuming $\I$
simple as long as possible. 

The category $\Cat{Hilb}$ itself is a locally small pre-Hilbert
category whose monoidal unit is a simple generator, and so is its
subcategory $\Cat{fdHilb}$ of finite-dimensional Hilbert spaces (see
Appendix~\ref{sec:thecategoryHilb}).   

Finally, a pre-Hilbert category whose morphisms are bounded is called
a \emph{Hilbert category}. It is easier to define this last axiom
rigorously after a discussion of scalars, and so we defer this to
Section~\ref{sec:completion}.

\section{Hilbert semimodules}
\label{sec:Hilbertsemimodules}

In this section, we study Hilbert semimodules, to be defined in
Definition~\ref{def:hilbertsemimodule} below. It turns out that the
structure of a pre-Hilbert category $\cat{H}$ gives rise to an
embedding of $\cat{H}$ into a category of Hilbert semimodules.
Let us first recall the notions of semiring and semimodule in some
detail, as they might be unfamiliar to the reader.

A \emph{semiring} is roughly a ``ring that does not necessarily have
subtraction''. All the semirings we use will be commutative.
Explicitly, a commutative semiring consists of a set $S$, two
elements $0,1 \in S$, and two binary operations $+$ and $\cdot$ on
$S$, such that the following equations hold for all $r,s,t \in S$:
\begin{align*}
  0 + s & = s, &
  1 \cdot s & = s, \\
  r + s & = s + r, &
  r \cdot s & = s \cdot r, \\
  r + (s + t) & = (r + s) + t,  &
  r \cdot (s \cdot t) & = (r \cdot s) \cdot t, \\
  s \cdot 0 & = 0, &
  r \cdot (s + t) & = r \cdot s + r \cdot t.
\end{align*}
Semirings are also known
as \emph{rigs}. For more information we refer to~\cite{golan:semirings}. 

A \emph{semimodule} over a
commutative semiring is a generalisation of a module over a
commutative ring, which in turn is a generalisation of a vector space
over a field. Explicitly, a semimodule over a commutative semiring $S$
is a set $M$ with a specified element $0 \in M$, equipped with
functions $+ \colon M \times M \to M$ and $\cdot \colon S \times M \to
M$ satisfying the following equations  
for all $r,s \in S$ and $l,m,n \in M$:
\begin{align*}
  s \cdot (m + n) & = s \cdot m + s \cdot n, &
  0 + m & = m, \\ 
  (r + s) \cdot m & = r \cdot m + s \cdot m, &
  m + n & = n + m, \\
  (r \cdot s) \cdot m & = r \cdot (s \cdot m), &
  l + (m + n) & = (l + m) + n, \\
  0 \cdot m & = 0, & 
  1 \cdot m & = m, \\
  s \cdot 0 & = 0.
\end{align*}
A function between $S$-semimodules is called $S$-semilinear when it
preserves $+$ and $\cdot$. Semimodules over a commutative
semiring $S$ and $S$-semilinear transformations form a category
$\SMod[S]$ that largely behaves like that of modules over a
commutative ring. For example, it is symmetric monoidal closed. 
The tensor product of $S$-semimodules $M$ and $N$ is generated by
elements of the form $m \tensor n$ for $m \in M$ and $n \in N$,
subject to the following relations:
\begin{align*}
  (m+m') \tensor n & = m \tensor n + m '\tensor n, \\
  m \tensor (n+n') & = m \tensor n + m \tensor n', \\
  (s\cdot m) \tensor n & = m \tensor (s \cdot n), \\
  k \cdot (m \tensor n) & = (k \cdot m) \tensor n = m \tensor (k \cdot
  n), \\
  0 \tensor n & = 0 = m \tensor 0,
\end{align*}
for $m,m' \in M$, $n, n' \in N$, $s \in S$ and $k \in \field{N}$.
It satisfies a universal property that differs slightly from that of
modules over a ring: every function from $M \times N$ to a commutative
monoid $T$ that is semilinear in both variables separately factors
uniquely through a semilinear function from $M \tensor N$ to
$T\slash\mathop{\sim}$, where $t \sim t'$ iff there is a $t'' \in T$ with
$t+t''=t'+t''$. 
For more information about semimodules, we refer to~\cite{golan:semirings},
or~\cite{heunen:semimoduleenrichment} for a categorical perspective. 

A commutative \emph{involutive semiring} is a commutative semiring
$S$ equipped with a semilinear involution $\ddag \colon  S \to S$. An
element $s$ of an involutive semiring is called \emph{positive},
denoted $s\geq 0$, when it is of the form $s=t^\ddag t$. The set of
all positive elements of an involutive semiring $S$ is denoted $S^+$.  
For every semimodule $M$ over a commutative involutive semiring, there is
also a semimodule $M^\ddag$, whose carrier set and addition are
the same as before, but whose scalar multiplication $sm$
is defined in terms of the scalar multiplication of $M$ by $s^\ddag
m$. An $S$-semilinear
map $f\colon M \to N$ also induces a map $f^\ddag\colon M^\ddag \to
N^\ddag$ by $f^\ddag(m)=f(m)$. Thus, an involution $\ddag$ on a
commutative semiring $S$ induces an involutive functor $\ddag \colon 
\SMod[S] \to \SMod[S]$. 

Now, just as pre-Hilbert spaces are vector spaces equipped with an
inner product, we can consider semimodules with an inner product.

\begin{definition}
\label{def:hilbertsemimodule}
  Let $S$ be a commutative involutive semiring.
  An $S$-semimodule $M$ is called a \emph{Hilbert semimodule} when
  it is equipped with a morphism $\inprod{-}{-}\colon M^\ddag \tensor M
  \to S$ of $\SMod[S]$, satisfying
  \begin{itemize}
    \item $\inprod{m}{n} = \inprod{n}{m}^\ddag$,
    \item $\inprod{m}{m} \geq 0$, and
    \item $\inprod{m}{-} = \inprod{n}{-} \Rightarrow m=n$.
  \end{itemize}
  The Hilbert semimodule is called \emph{strict} if moreover
  \begin{itemize}
    \item $\inprod{m}{m} = 0 \Rightarrow m=0$.
  \end{itemize}
\end{definition}

For example, $S$ itself is a Hilbert $S$-semimodule by
$\inprod{s}{t}_S = s^\ddag t$. Recall that a semiring $S$ is
\emph{multiplicatively cancellative} when $sr=st$ and $s \neq 0$ imply
$r=t$~\cite{golan:semirings}. Thus $S$ is a strict Hilbert
$S$-semimodule iff $S$ is multiplicatively cancellative. 

The following choice of morphisms is also the standard choice of
morphisms between Hilbert
C*-modules~\cite{lance:hilbertcstarmodules}.\footnote{There is another
analogy for this choice of morphisms. Writing $M^*=M \multimap S$ for
the dual $S$-semimodule of $M$,
Definition~\ref{def:hilbertsemimodule} resembles that of a `diagonal'
object of the Chu construction on $\SMod[S]$. The Chu construction
provides a `generalised topology', like an inner product provides a
vector space provides with a metric and hence a
topology~\cite{barr:starautonomousoncemore}.}

\begin{definition}
  A semimodule homomorphism $f\colon M \to N$ between Hilbert
  $S$-semi\-modules is called \emph{adjointable} when there is a
  semimodule homomorphism $f^\dag\colon N \to M$ such that
  $\inprod{f^\ddag(m)}{n}_N = \inprod{m}{f^\dag(n)}_M$ for all $m
  \in M^\ddag$ and $n \in N$.    
\end{definition}

The \emph{adjoint} $f^\dag$ is unique since the power transpose of the
inner product is a monomorphism. However, it does not necessarily 
exist, except in special situations like (complete) Hilbert spaces
($S=\field{C}$ or $S=\field{R})$ and bounded semilattices
($S$ is the Boolean semiring $\field{B}=(\{0,1\},\max,\min)$,
see~\cite{paseka:hilbertquantales}). Hilbert 
$S$-semimodules and adjointable maps organise themselves in a category
$\HMod[S]$. We denote by $\sHMod[S]$ the full subcategory of strict
Hilbert $S$-semimodules. The choice of morphisms ensures that
$\HMod[S]$ and $\sHMod[S]$ are dagger categories. Let us study some of
their properties. The following lemma could be regarded as an
analogue of the Riesz-Fischer theorem
\cite[Theorem~III.1]{reedsimon:functionalanalysis}.

\begin{lemma}
\label{lem:hmodissmodenriched}
  $\HMod[S]$ is enriched over $\SMod[S]$, and 
  \[
    \HMod[S](S,X) = \SMod[S](S,X) \cong X,
  \]
  where we suppressed the forgetful functor $\HMod[S] \to \SMod[S]$.
\end{lemma}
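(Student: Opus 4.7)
The plan is to split the lemma into two independent assertions: the enrichment of $\HMod[S]$ over $\SMod[S]$, and the identification $\HMod[S](S,X) = \SMod[S](S,X) \cong X$.

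For the enrichment, I would equip each hom-set $\HMod[S](X,Y)$ with the pointwise operations $(f+g)(x) := f(x) + g(x)$ and $(sf)(x) := s \cdot f(x)$ inherited from the $S$-semimodule structure on $\SMod[S](X,Y)$. The only nontrivial point is that adjointable maps are closed under these operations. I would establish this by exhibiting explicit adjoints: $(f+g)^\dag = f^\dag + g^\dag$ and $(sf)^\dag = s^\ddag f^\dag$, verified by plugging into the defining adjointness identity and using the semilinearity of the inner product (conjugate-$S$-linear in the first slot, $S$-linear in the second). Uniqueness of adjoints, which follows from the non-degeneracy axiom in Definition~\ref{def:hilbertsemimodule}, ensures these formulas define well-behaved hom-set operations. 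Bilinearity of composition then drops out immediately from the pointwise definitions, and the zero map is self-adjoint.

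For $\HMod[S](S,X) = \SMod[S](S,X) \cong X$, the right-hand isomorphism is the standard fact that $S$ is the free $S$-semimodule on one generator, realised by $f \mapsto f(1)$ with inverse $x \mapsto (s \mapsto s \cdot x)$. What remains is that every $S$-semilinear $f \colon S \to X$ is adjointable, so that the forgetful inclusion $\HMod[S](S,X) \hookrightarrow \SMod[S](S,X)$ is actually an equality. Writing $x := f(1)$ so that $f(s) = s \cdot x$, the candidate $f^\dag \colon X \to S$ defined by $f^\dag(y) := \inprod{x}{y}_X$ is semilinear by linearity of the inner product in the second argument, and a short calculation using conjugate-linearity in the first argument together with $\inprod{s}{t}_S = s^\ddag t$ yields $\inprod{f^\ddag(s)}{y}_X = s^\ddag \inprod{x}{y}_X = \inprod{s}{f^\dag(y)}_S$, as required.

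The chief source of friction is the bookkeeping around the involutive twist: the functor $M \mapsto M^\ddag$ swaps which scalar action is linear versus conjugate-linear, so one must carefully distinguish semilinearity with respect to $M$ from semilinearity with respect to $M^\ddag$. This is exactly why $\ddag$ appears in the formulas $(sf)^\dag = s^\ddag f^\dag$ and $f^\dag(y) = \inprod{x}{y}_X$. Once this is straight, every identity above reduces mechanically to one of the three defining axioms of a Hilbert semimodule.
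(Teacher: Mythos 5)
Your proposal is correct and follows essentially the same route as the paper's proof: closure of adjointable maps under the pointwise operations via the explicit adjoints $(f+g)^\dag = f^\dag + g^\dag$ and $(sf)^\dag = s^\ddag f^\dag$, and adjointability of every semilinear $f\colon S \to X$ via $f^\dag = \inprod{f(1)}{-}_X$, with the same computations. The only cosmetic difference is that you realise $\SMod[S](S,X) \cong X$ explicitly by evaluation at $1$, where the paper invokes $S$ being a generator; these amount to the same free-semimodule fact.
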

\begin{proof}
  For $X,Y \in \HMod[S]$, the zero map $X \to Y$ in $\SMod[S]$ is
  self-adjoint, and hence a morphism in $\HMod[S]$. 
  If $f,g\colon X \to Y$ are adjointable, then so is $f+g$, as its adjoint
  is $f^\dag + g^\dag$.
  If $s \in S$ and $f\colon X \to Y$ is adjointable, then so is $sf$, as its
  adjoint is $s^\ddag f^\dag$:
  \[
      \inprod{sf(x)}{y}_Y
    = s^\ddag \inprod{f(x)}{y}_Y 
    = s^\ddag \inprod{x}{f^\dag(y)}_X 
    = \inprod{x}{s^\ddag f^\dag(y)}_X.
  \]
  Since composition is bilinear, $\HMod[S]$ is thus enriched over
  $\SMod[S]$. 

  Suppose $X \in \HMod[S]$, and $f\colon S \to X$ is a morphism of
  $\SMod[S]$. Define a morphism $f^\dag\colon X \to S$ of $\SMod[S]$ by $f^\dag 
  = \inprod{f(1)}{-}_X$. Then
  \[
      \inprod{f(s)}{x}_X = \inprod{sf(1)}{x}_X 
    = s^\ddag\inprod{f(1)}{x}_X  
    = s^\ddag f^\dag(x) = \inprod{s}{f^\dag(x)}_S.
  \]
  Hence $f \in \HMod[S](S,X)$. Obviously $\HMod[S](S,X)
  \subseteq \SMod[S](S,X)$. The fact that $S$ is a generator for
  $\HMod[S]$ proves the last claim $\SMod[S](S,X) \cong X$.
\end{proof}

Notice from the proof of the above lemma that the inner product of $X$
can be reconstructed from $\HMod[S](S,X)$. Indeed, if we temporarily define
$\underline{x}\colon S \to X$ by $1 \mapsto x$ for $x \in X$, then we can use the
adjoint by
\[
  \inprod{x}{y}_X = \inprod{\underline{x}(1)}{y}_X
  = \inprod{1}{\underline{x}^\dag(y)}_S = 
  \underline{x}^\dag (y) = \underline{x}^\dag \after
  \underline{y}(1). 
\]
We can go further by providing $\HMod[S](S,X)$ itself with
the structure of a Hilbert $S$-semimodule: for $f,g \in
\HMod[S](S,X)$, put $\inprod{f}{g}_{\HMod[S](S,X)} = f^\dag \after g
(1)$. Then the above lemma can be strengthened as follows.

\begin{lemma}
\label{lem:HModSXisX}
  There is a dagger isomorphism $X \cong \HMod[S](S,X)$ in $\HMod[S]$.
\end{lemma}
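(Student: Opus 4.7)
The plan is to show that the semilinear bijection $\Phi\colon \HMod[S](S,X) \to X$ given by $\Phi(f) = f(1)$, already established in Lemma~\ref{lem:hmodissmodenriched}, is in fact a dagger isomorphism; its inverse is $\Psi(x) = \underline{x}$. The key ingredient, extracted from the proof of the previous lemma, is the identity $f^\dag(y) = \inprod{f(1)}{y}_X$ for any $f \in \HMod[S](S,X)$ and $y \in X$.

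First I would verify that $\Phi$ preserves inner products. By the definition of the inner product on $\HMod[S](S,X)$ given just before the statement, combined with the identity above,
\[
  \inprod{f}{g}_{\HMod[S](S,X)} \;=\; f^\dag \after g(1) \;=\; \inprod{f(1)}{g(1)}_X \;=\; \inprod{\Phi(f)}{\Phi(g)}_X.
\]
This incidentally discharges a loose end: the three axioms of Definition~\ref{def:hilbertsemimodule} for the prescribed $\inprod{-}{-}_{\HMod[S](S,X)}$ transport across the bijection $\Phi$ from the Hilbert semimodule structure on $X$. Next, I would identify the adjoint of $\Phi$ by the computation
\[
  \inprod{\Phi(f)}{x}_X \;=\; \inprod{f(1)}{x}_X \;=\; f^\dag(x) \;=\; f^\dag\bigl(\Psi(x)(1)\bigr) \;=\; \inprod{f}{\Psi(x)}_{\HMod[S](S,X)},
\]
which shows $\Phi^\dag = \Psi$. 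Since $\Psi$ is a two-sided inverse of $\Phi$ by Lemma~\ref{lem:hmodissmodenriched}, this yields $\Phi^\dag \Phi = \id$ and $\Phi \Phi^\dag = \id$, so $\Phi$ is simultaneously dagger monic and dagger epic, i.e., a dagger isomorphism.

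I do not anticipate any real obstacle: once the identity $f^\dag = \inprod{f(1)}{-}_X$ from the preceding lemma is in hand, the argument is essentially bookkeeping. The only mild subtlety is conceptual rather than technical, namely reading $\inprod{f}{g}_{\HMod[S](S,X)} = f^\dag \after g(1)$ as \emph{defining} (not just describing) a Hilbert semimodule structure whose axioms are then automatic by transport along $\Phi$.
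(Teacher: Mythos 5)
Your proposal is correct and follows essentially the same route as the paper: the paper defines the same mutually inverse pair ($x \mapsto x\cdot(-)$ and $\varphi \mapsto \varphi(1)$) and verifies by the same adjointness computation, using $f^\dag = \inprod{f(1)}{-}_X$ from the preceding lemma, that they are each other's daggers. Your additional check that $\inprod{f}{g}_{\HMod[S](S,X)} = f^\dag \after g(1)$ genuinely defines a Hilbert semimodule structure (by transport along the bijection) is a loose end the paper leaves implicit, and is worth making explicit.
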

\begin{proof}
  Define $f\colon X \to \HMod[S](S,X)$ by $f(x)=x\cdot(-)$, and
  $g\colon \HMod[S](S,X) \to X$ by $g(\varphi) = \varphi(1)$. Then $f \after
  g=\id$ and $g\after f = \id$, and moreover $f^\dag=g$:
  \begin{align*}
        \inprod{x}{g(\varphi)}_X 
    & = \inprod{x}{\varphi(1)}_X
      = (x \cdot (-))^\dag \after \varphi(1) \\
    & = \inprod{x \cdot (-)}{\varphi}_{\HMod[S](S,X)}
      = \inprod{f(x)}{\varphi}_{\HMod[S](S,X)}
  \end{align*}
\end{proof}

Recall that (a subset of) a semiring is called
\emph{zerosumfree} when $s+t=0$ implies $s=t=0$ for all elements $s$
and $t$ in it~\cite{golan:semirings}.

\begin{proposition}
  $\HMod[S]$ has finite dagger biproducts. 
  When $S^+$ is zerosumfree, $\sHMod[S]$ has finite dagger biproducts.
\end{proposition}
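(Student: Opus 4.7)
The plan is to lift the biproduct structure from $\SMod[S]$ to $\HMod[S]$ by equipping it with an inner product, and then to verify the requisite axioms directly. The zero object is the trivial semimodule $\{0\}$ equipped with the unique (zero) inner product; all Hilbert semimodule axioms hold vacuously, and the unique morphism in or out of it is self-adjoint, so it is the zero object of the dagger category $\HMod[S]$ (and a fortiori of $\sHMod[S]$). For the binary case, given $X, Y \in \HMod[S]$, I would take the $\SMod[S]$-biproduct $X \oplus Y$ (the direct sum, with coordinatewise operations, projections $\pi_X, \pi_Y$, and injections $\kappa_X, \kappa_Y$) and equip it with
\[
  \inprod{(x_1,y_1)}{(x_2,y_2)}_{X \oplus Y} \;:=\; \inprod{x_1}{x_2}_X + \inprod{y_1}{y_2}_Y.
\]

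First I would check the Hilbert semimodule axioms. Sesquilinearity and conjugate symmetry transfer termwise from the two factors. Nondegeneracy follows by testing $\inprod{(x,y)}{-}$ against vectors $(z,0)$ and $(0,w)$, which isolates the conditions on each factor. A direct calculation shows that projections and injections are mutually adjoint:
\[
  \inprod{\pi_X(x,y)}{x'}_X = \inprod{x}{x'}_X = \inprod{(x,y)}{(x',0)}_{X \oplus Y} = \inprod{(x,y)}{\kappa_X(x')}_{X \oplus Y},
\]
and symmetrically for $Y$; so $\pi_X^\dag = \kappa_X$ and $\pi_Y^\dag = \kappa_Y$. The pairing $\langle f, g\rangle$ of adjointable $f\colon Z \to X$ and $g\colon Z \to Y$ is adjointable with adjoint the copairing $[f^\dag, g^\dag]$, using the $\SMod[S]$-enrichment from Lemma~\ref{lem:hmodissmodenriched}; the $\SMod[S]$-biproduct equations then carry over verbatim, giving a dagger biproduct in $\HMod[S]$.

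Finally, for the strict case, the key observation is that $\inprod{(x,y)}{(x,y)} = \inprod{x}{x}_X + \inprod{y}{y}_Y$ lies in $S^+$ (using closure of $S^+$ under addition, which one must assume or derive in the ambient involutive semiring); if this sum vanishes and $S^+$ is zerosumfree, then each summand is zero, and strictness of $X$ and $Y$ forces $x = 0$ and $y = 0$, so $X \oplus Y$ belongs to $\sHMod[S]$. The main obstacle I anticipate is the positivity verification, which is what forces the zerosumfree hypothesis in the strict case: without it, strictness need not propagate through the biproduct, even though the underlying dagger biproduct structure in $\HMod[S]$ itself requires only the axiomatic notion of positivity applied termwise.
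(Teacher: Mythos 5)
Your proposal is correct and follows essentially the same route as the paper: the coordinatewise inner product on the $\SMod[S]$-biproduct, nondegeneracy by testing against the injections, adjointness $\pi_i^\dag = \kappa_i$, and zerosumfreeness of $S^+$ to propagate strictness. Your extra remarks (the zero object, the copairing as adjoint of the pairing, and the need to know that a sum of positives is positive) only make explicit points the paper leaves implicit.
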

\begin{proof}
  Let $H_1,H_2 \in \HMod[S]$ be given. Consider the $S$-semimodule
  $H = H_1 \oplus H_2$. Equip it with the inner product 
  \begin{equation}
  \label{eq:inprodbiprod}
    \inprod{h}{h'}_H = \inprod{\pi_1(h)}{\pi_1(h')}_{H_1} +
                       \inprod{\pi_2(h)}{\pi_2(h')}_{H_2}.
  \end{equation}
  Suppose that $\inprod{h}{-}_H = \inprod{h'}{-}_H$. For every $i \in
  \{1,2\}$ and $h'' \in H_i$ then 
  \[
    \inprod{\pi_i(h)}{h''}_{H_i} = \inprod{h}{\kappa_i(h'')}_H = 
    \inprod{h'}{\kappa_i(h'')}_H = \inprod{\pi_i(h')}{h''}_{H_i},
  \]
  whence $\pi_i(h)=\pi_i(h')$, and so $h=h'$. Thus $H$ is a
  Hilbert semimodule. The maps $\kappa_i$ are morphisms of $\HMod[S]$,
  as their adjoints are given by $\pi_i\colon H \to H_i$:
  \[
      \inprod{h}{\kappa_i(h')}_H
    = \inprod{\pi_1(h)}{\pi_1\kappa_i(h')}_{H_1} + 
      \inprod{\pi_2(h)}{\pi_2\kappa_i(h')}_{H_2}
    = \inprod{\pi_i(h)}{h'}_{H_i}.
  \]
  For $\sHMod[S]$ we need to verify that $H$ is strict when $H_1$ and
  $H_2$ are. Suppose $\inprod{h}{h}_H = 0$. Then
  $\inprod{\pi_1(h)}{\pi_1(h)}_{H_1} +
  \inprod{\pi_2(h)}{\pi_2(h)}_{H_2} = 0$. Since $S^+$ is zerosumfree,
  we have $\inprod{\pi_i(h)}{\pi_i(h)}_{H_i}=0$ for $i=1,2$. Hence
  $\pi_i(h)=0$, because $H_i$ is strict. Thus $h=0$, and $H$ is indeed
  strict. 
\end{proof}

\begin{proposition}
  $\HMod[S]$ is symmetric dagger monoidal. When $S$ is
  multiplicatively cancellative, $\sHMod[S]$ is symmetric
  dagger monoidal.
\end{proposition}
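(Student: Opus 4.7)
The plan is to lift the symmetric monoidal structure of $\SMod[S]$ to $\HMod[S]$ by equipping the $S$-semimodule tensor product $M \tensor N$ with an inner product derived from those of $M$ and $N$, taking $S$ itself (with $\inprod{s}{t}_S = s^\ddag t$) as the monoidal unit. Since $\SMod[S]$ is already symmetric monoidal closed and all the coherence maps are $S$-semilinear bijections, most of the work reduces to verifying that a canonically defined inner product satisfies the axioms of Definition~\ref{def:hilbertsemimodule} and that the existing coherence isomorphisms lift to dagger isomorphisms.

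Concretely, I would define the inner product on $M \tensor N$ as the composite
\[
  (M \tensor N)^\ddag \tensor (M \tensor N)
  \cong (M^\ddag \tensor M) \tensor (N^\ddag \tensor N)
  \xrightarrow{\inprod{-}{-}_M \tensor \inprod{-}{-}_N}
  S \tensor S \xrightarrow{\;\cdot\;} S,
\]
so that $\inprod{m \tensor n}{m' \tensor n'} = \inprod{m}{m'}_M \cdot \inprod{n}{n'}_N$ on elementary tensors. Conjugate-symmetry follows from that of $M$ and $N$ combined with commutativity of $S$. Positivity on an elementary tensor reduces to $(a^\ddag a)(b^\ddag b) = (ab)^\ddag(ab)$, again via commutativity. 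Non-degeneracy is handled by testing $\inprod{x}{-} = \inprod{y}{-}$ against all elementary tensors $m' \tensor n'$ and applying non-degeneracy of $\inprod{-}{-}_M$ and $\inprod{-}{-}_N$ in turn.

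For morphisms, $(f \tensor g)^\dag = f^\dag \tensor g^\dag$ is a direct check on elementary tensors using adjointability of $f$ and $g$ separately, and extends to arbitrary elements by bilinearity. The associator, symmetry and unit isomorphisms of $\SMod[S]$ preserve the new inner products on elementary tensors, so their $\SMod[S]$-inverses serve as their $\HMod[S]$-adjoints, making them dagger isomorphisms. The unit coherence uses the identification $S \tensor M \cong M$ together with $\inprod{s \tensor m}{s' \tensor m'} = s^\ddag s' \inprod{m}{m'}_M = \inprod{s \cdot m}{s' \cdot m'}_M$. Together with the enrichment of Lemma~\ref{lem:hmodissmodenriched}, this yields the symmetric dagger monoidal structure on $\HMod[S]$.

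For $\sHMod[S]$, assuming $S$ is multiplicatively cancellative, the additional task is to check that the tensor product of two strict Hilbert semimodules is again strict; I expect this to be the main obstacle. On an elementary tensor the implication $(t^\ddag t)(u^\ddag u) = 0 \Rightarrow m \tensor n = 0$ follows directly by multiplicative cancellation in $S$ together with strictness of $M$ and $N$. For a general element $x = \sum_i m_i \tensor n_i$, however, $\inprod{x}{x}$ carries cross terms $\inprod{m_i}{m_j}_M \inprod{n_i}{n_j}_N$, so the argument must first re-express $x$ via an orthogonalisation of either the $m_i$ or the $n_i$ to diagonalise the Gram matrix, after which multiplicative cancellativity of $S$ (which, as noted after Definition~\ref{def:hilbertsemimodule}, makes $S$ itself a strict Hilbert semimodule) lets one conclude that each surviving summand, and hence $x$, vanishes.
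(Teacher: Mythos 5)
The central gap is in your treatment of non-degeneracy. You equip the plain semimodule tensor product $M \tensor N$ with $\inprod{m \tensor n}{m' \tensor n'} = \inprod{m}{m'}_M \cdot \inprod{n}{n'}_N$ and claim the third axiom of Definition~\ref{def:hilbertsemimodule} follows by ``testing against elementary tensors and applying non-degeneracy of $M$ and $N$ in turn''. That argument does not go through: for general elements $x = \sum_i m_i \tensor n_i$ and $y = \sum_j m'_j \tensor n'_j$, the hypothesis only yields equality of two $S$-valued functionals, and over a semiring there is no orthogonalisation, no notion of linear independence, and not even additive cancellation available to convert this into the equality $x = y$ in $M \tensor N$. (The universal property of $\tensor$ recalled in Section~\ref{sec:Hilbertsemimodules}, which factors bilinear maps only through a quotient $T\slash\mathop{\sim}$, is already a warning that the power transpose of your proposed inner product need not be injective.) The paper sidesteps this by construction: it defines a congruence $\sim$ on $H \tensor K$ identifying elementary tensors whose inner-product functionals agree, and takes the monoidal product to be the quotient $H \tensor_H K = H \tensor K\slash\mathop{\sim}$, on which non-degeneracy holds essentially by fiat. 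This quotient is the key idea your proposal is missing; without it the object you build need not be a Hilbert semimodule at all.

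The same difficulty resurfaces in your strictness argument for $\sHMod[S]$: the proposed ``orthogonalisation of the $m_i$ to diagonalise the Gram matrix'' is not available over a semiring lacking subtraction and division, so the reduction of a general element to elementary tensors cannot be carried out this way (to be fair, the paper's own strictness check is also performed only on classes of elementary tensors). The remainder of your outline --- the formula on elementary tensors, conjugate-symmetry and positivity via commutativity of $S$, $(f \tensor g)^\dag = f^\dag \tensor g^\dag$, and the unit $S$ with $\lambda^\dag = \lambda^{-1}$ --- matches the paper, except that in the paper each of these must additionally be checked to respect $\sim$ and descend to the quotient.
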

\begin{proof}
  Let $H,K$ be Hilbert $S$-semimodules; then $H \tensor K$ is again an
  $S$-semimodule. Define an equivalence relation $\sim$ on $H \tensor
  K$ by setting
  \[
    h \tensor k \sim h' \tensor k'
    \quad\mbox{iff}\quad 
    \inprod{h}{-}_H \cdot \inprod{k}{-}_K = 
    \inprod{h'}{-}_H \cdot \inprod{k'}{-}_K \colon  H \oplus K \to S.
  \]
  This is a congruence relation (see~\cite{golan:semirings}),
  so $H \tensor_H K = H \tensor K \slash\mathop{\sim}$ is again an
  $S$-semimodule. Defining an inner product on it by 
  \[
      \inprod{[h \tensor k]_\sim}{[h'\tensor k']_\sim}_{H \tensor_H K} 
    = \inprod{h}{h'}_H \cdot \inprod{k}{k'}_K.
  \]
  makes $H \tensor_H K$ into a Hilbert semimodule. 

  Now let $f\colon H\to H'$ and $g\colon K \to K'$ be morphisms of $\HMod[S]$. 
  Define $f \tensor_H g \colon  H \tensor_H K \to H' \tensor_H K'$ by
  $(f \tensor_H g)([h \tensor k]_\sim) = [f(h) \tensor g(k)]_\sim$. 
  This is a well-defined function, for if $h \tensor k \sim h' \tensor
  k'$, then
  \begin{align*}
        \inprod{f(h)}{-}_{H'} \cdot \inprod{g(k)}{-}_{K'}
    & = \inprod{h}{f^\dag(-)}_H \cdot \inprod{k}{g^\dag(-)}_K \\
    & = \inprod{h'}{f^\dag(-)}_H \cdot \inprod{k'}{g^\dag(-)}_K \\
    & = \inprod{f(h')}{-}_{H'} \cdot \inprod{g(k')}{-}_{K'},
  \end{align*}
  and hence $(f \tensor_H g)(h \tensor k) \sim (f \tensor_H g)(h'
  \tensor k')$. Moreover, it is adjointable, and hence a morphism of
  $\HMod[S]$:
  \begin{align*}
        \inprod{(f \tensor_H g)(h \tensor k)}{(h' \tensor k')}_{H' \tensor_H K'}
    & = \inprod{f(h) \tensor g(k)}{h' \tensor k'}_{H' \tensor_H K'} \\
    & = \inprod{f(h)}{h'}_{H'} \cdot \inprod{g(k)}{k'}_{K'} \\
    & = \inprod{h}{f^\dag(h')}_H \cdot \inprod{k}{g(k')}_K \\
    & = \inprod{h \tensor k}{f^\dag(h') \tensor g^\dag(k')}_{H \tensor_H K} \\
    & = \inprod{h \tensor k}{(f^\dag \tensor g^\dag)(h' \tensor k')}_{H \tensor_H K}
  \end{align*}
  In the same way, one shows that the coherence isomorphisms $\alpha$,
  $\lambda$, $\rho$ and $\gamma$ of the tensor product in $\SMod[S]$
  respect $\sim$, and descend to dagger isomorphisms in
  $\HMod[S]$. For example:
  \begin{align*}
        \inprod{\lambda(s \tensor h)}{h'}_H
    & = \inprod{sh}{h'}_H \\
    & = s^\ddag \inprod{h}{h'}_H \\ 
    & = \inprod{s}{1}_S \cdot \inprod{h}{h'}_H \\
    & = \inprod{s \tensor h}{1 \tensor h'}_{S \tensor_H H} \\
    & = \inprod{s \tensor h}{\lambda^{-1}(h')}_{S \tensor _H H},
  \end{align*}
  so $\lambda^\dag = \lambda^{-1}$.
  A routine check shows that $(\tensor_H,S)$ makes $\HMod[S]$ into a
  symmetric monoidal category.

  Finally, let us verify that these tensor products descend to
  $\sHMod[S]$ when $S$ is multiplicatively cancellative.
  Suppose $0 = \inprod{[h \tensor k]_\sim}{[h \tensor k]_\sim}_{H
  \tensor_H K} = \inprod{h}{h}_H \cdot \inprod{k}{k}_K$. Then since
  $S$ is multiplicatively cancellative, either $\inprod{h}{h}_H=0$ or
  $\inprod{k}{k}_H =0$. Since $H$ and $K$ are assumed strict, this
  means that either $h=0$ or $k=0$. In both cases we conclude $[h
  \tensor k]_\sim = 0$, so that $H \tensor_H K$ is indeed strict.
\end{proof}

Now suppose $\cat{H}$ is a nontrivial\footnote{The unique trivial
semiring $S$ with $0=1$ is sometimes excluded from consideration by 
convention. For example, fields usually require $0 \neq 1$ by
definition. In our case, the semiring $S$ is trivial iff
the category $\cat{H}$ is trivial, \ie when $\cat{H}$ is the
one-morphism (and hence one-object) category. For this reason many
results in this paper assume $\cat{H}$ to be nontrivial, but the main
result, Theorem~\ref{thm:main}, holds regardlessly.} locally small
pre-Hilbert category with monoidal unit $\I$. Then $S=\cat{H}(\I,\I)$
is a commutative involutive semiring, and $\cat{H}$ is enriched over
$\SMod[S]$. Explicitly, the zero
morphism is the unique one that factors through the zero object, the
sum $f+g$ of two morphisms $f,g \colon X \to Y$ is given by
\[\xymatrix@1{
  X \ar^-{\Delta}[r] & X \oplus X \ar^-{f \oplus g}[r] & Y \oplus Y
  \ar^-{\nabla}[r] & Y,
}\]
and the multiplication of a morphism $f\colon X \to Y$ with a scalar
$s \colon \I \to \I$ is determined by
\[\xymatrix@1{
  X \ar^-{\cong}[r] & \I \tensor X \ar^-{s \tensor f}[r] & \I \tensor Y
  \ar^-{\cong}[r] & Y.
}\]
The scalar multiplication works more generally for symmetric monoidal
category~\cite{abramsky:scalars}. The fact that the above provides an
enrichment in $\SMod[S]$ (and that this enrichment is furthermore
functorial) is proved in~\cite{heunen:semimoduleenrichment}.
Hence there is a functor $\cat{H}(\I,-) \colon  \cat{H} \to
\SMod[S]$. If $\I$ is a generator, this functor is faithful. We will
now show that this functor in fact factors through $\sHMod[S]$.

\begin{lemma}
\label{lem:semiring}
  Let $\cat{H}$ be a nontrivial locally small pre-Hilbert
  category. Denote by $\I$ its monoidal unit. Then $S=\cat{H}(\I,\I)$ is
  a commutative involutive semiring, and $S^+$ is zerosumfree. When
  moreover $\I$ is simple, $S$ is multiplicatively cancellative.
\end{lemma}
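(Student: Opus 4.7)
The plan is to tackle the three assertions in order: semiring structure of $S$, zerosumfreeness of $S^+$, and multiplicative cancellativity under $\I$ simple. For the semiring structure I would invoke the standard scalar arithmetic for symmetric monoidal categories with biproducts \cite{abramsky:scalars, heunen:semimoduleenrichment}: composition of endomorphisms of $\I$ gives a commutative multiplication (Eckmann--Hilton via the monoidal unit), biproducts supply the additive monoid, and distributivity is automatic. The dagger restricts to an involution $\ddag$ on $S$; preservation of addition follows from $(f \oplus g)^\dag = f^\dag \oplus g^\dag$ together with $\Delta^\dag = \nabla$, and the relation $(st)^\dag = s^\dag t^\dag$ is just contravariant functoriality of $\dag$ combined with commutativity of $S$.

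For zerosumfreeness of $S^+$ I would first establish the auxiliary fact that $f^\dag f = 0$ implies $f = 0$ for any morphism of $\cat{H}$. Since the pre-Hilbert axioms are self-dual, every morphism has both a kernel (dagger mono) and a cokernel (dagger epi), so every $f$ admits a dagger image factorization $f = me$ with $m$ a dagger mono and $e$ epi (take $m = \ker(\mathrm{coker}(f))$; then $e$ is epi by the usual universal property of the image). Hence $f^\dag f = e^\dag m^\dag m e = e^\dag e$, and since $e$ is epi its adjoint $e^\dag$ is mono (a standard dagger transposition); left-cancelling $e^\dag$ from $e^\dag e = e^\dag \circ 0$ yields $e = 0$, so $f = 0$. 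Applying this to the pairing $u = \kappa_1 t + \kappa_2 t' \colon \I \to \I \oplus \I$ for $s = t^\dag t$ and $s' = (t')^\dag t'$, a short matrix computation using $\pi_i \kappa_j = \delta_{ij}$ gives $u^\dag u = s + s'$. Thus $s + s' = 0$ forces $u = 0$, so $t = \pi_1 u = 0$ and $t' = \pi_2 u = 0$, whence $s = s' = 0$.

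For cancellativity under $\I$ simple, I would first show that every nonzero $s \in S$ is epi. Factor $s = me$ as above with $m \colon I \to \I$ a dagger mono. Since $\Sub(\I) = \{0, \I\}$, the subobject represented by $m$ is either $0$ (forcing $m = 0$ and hence $s = 0$, a contradiction) or is represented by $\id_\I$, which forces $m$ itself to be an iso; combined with $m^\dag m = \id$ this makes $m$ a dagger iso. So $s$ is a composite of a dagger iso and an epi, hence epi. Finally, given $sr = st$, commutativity of composition of scalars rewrites this as $rs = ts$, and epicity of $s$ delivers $r = t$.

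The chief obstacle is the auxiliary fact $f^\dag f = 0 \Rightarrow f = 0$; it is not a formal manipulation but rests on both the dagger image factorization (ultimately on the self-dual availability of dagger kernels and dagger cokernels) and the key dagger trick that adjoints convert epis to monos. Once that piece is in place, both the zerosumfree and the cancellativity claims fall out by short arguments via the biproduct pairing and the simplicity of $\I$ respectively.
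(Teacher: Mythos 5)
Your proof is correct, but it takes a different route from the paper: the paper's own proof of this lemma is essentially a pointer to the literature (the semiring structure is delegated to \cite{heunen:semimoduleenrichment}, multiplicative cancellativity to \cite[3.5]{vicary:complexnumbers}, and zerosumfreeness of $S^+$ to \cite[3.10]{vicary:complexnumbers}), whereas you supply self-contained arguments. Your central auxiliary fact --- that $f^\dag f = 0$ forces $f = 0$ --- is precisely the statement the paper later quotes as \cite[2.11]{vicary:complexnumbers} in the proof of Theorem~\ref{thm:scalars}, and your derivation of it via the epi/dagger-mono factorisation is sound; note, however, that within this paper that factorisation is only established in Section~\ref{sec:scalarfield} (Lemma~\ref{lem:factorisation} and the duality remark following it), so your proof imports machinery from later in the text. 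This is not circular, since Lemmas~\ref{lem:monokerzero} and~\ref{lem:factorisation} use nothing about scalars, but it does reorder the logical dependencies. One small imprecision: your parenthetical that $e$ is epi ``by the usual universal property of the image'' undersells the work involved --- that $m = \ker(\coker f)$ yields an \emph{epi} $e$ is exactly the dual of Lemma~\ref{lem:factorisation}, whose proof is a genuine diagram argument, not a formality; you do flag the factorisation as the chief obstacle, so this is a matter of emphasis rather than a gap. Your cancellativity argument (nonzero scalars are epi, then cancel using commutativity) is a mild economisation of the paper's later Lemma~\ref{lem:scalardivision}, which proves the stronger statement that nonzero scalars are invertible. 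What your approach buys is self-containedness and an explicit exhibition of where each hypothesis (dagger kernels, simplicity of $\I$) enters; what the paper's approach buys is brevity and a cleaner separation between the semimodule-theoretic Section~\ref{sec:Hilbertsemimodules} and the factorisation technology of Section~\ref{sec:scalarfield}.
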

\begin{proof}
  For the proof that $S$ is a semiring we refer
  to~\cite{heunen:semimoduleenrichment}.  
  If $\I$ is simple, \cite[3.5]{vicary:complexnumbers} shows that $S$
  is multiplicatively cancellative, and~\cite[3.10]{vicary:complexnumbers} 
  shows that $S^+$ is zerosumfree in any case. 
\end{proof}

\begin{theorem}
\label{thm:phase1}
  Let $\cat{H}$ be a nontrivial locally small pre-Hilbert category. Denote
  its monoidal unit by $\I$. There is a functor 
  $\cat{H}(\I,-) \colon  \cat{H} \to \sHMod[S]$ for $S=\cat{H}(\I,\I)$. It
  preserves $\dag$, $\oplus$, and kernels. It is monoidal when $\I$ is simple.
  It is faithful when $\I$ is a generator. 
\end{theorem}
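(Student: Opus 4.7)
The plan is to leverage the $\SMod[S]$-enrichment of $\cat{H}$ recalled above and equip each semimodule $\cat{H}(\I,X)$ with the inner product $\inprod{x}{y} = x^\dag \after y \in S$, with the involution $\ddag$ on $S = \cat{H}(\I,\I)$ given by $\dag$. Conjugate symmetry and sesquilinearity reduce to functoriality of $\dag$, and positivity $\inprod{x}{x} = x^\dag x \in S^+$ is by definition.

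The hardest part is \emph{strictness}: $x^\dag x = 0 \Rightarrow x = 0$ for $x : \I \to X$. I would prove this by taking $k : K \to X$ to be the dagger kernel of $x^\dag : X \to \I$. Since $x^\dag \after x = 0$, the morphism $x$ factors uniquely as $x = k \after u$ for some $u : \I \to K$. Applying $k^\dag$ and using $k^\dag \after k = \id$ then gives $u = k^\dag \after x = (x^\dag \after k)^\dag = 0$, hence $x = k \after 0 = 0$. Non-degeneracy follows by applying strictness to $x - y$.

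With $\cat{H}(\I,X) \in \sHMod[S]$ established, preservation of the remaining structure is largely routine. For $\dag$, the calculation $\inprod{f \after x}{y}_Y = x^\dag f^\dag y = \inprod{x}{f^\dag \after y}_X$ shows $\cat{H}(\I,f)^\dag = \cat{H}(\I,f^\dag)$. For $\oplus$, the biproduct identity $\kappa_1 \pi_1 + \kappa_2 \pi_2 = \id$ combined with $\pi_i^\dag = \kappa_i$ lets the inner product on $\cat{H}(\I, X \oplus Y)$ split according to~(\ref{eq:inprodbiprod}). For kernels, given a dagger kernel $k$ of $f$, the set-theoretic kernel of $\cat{H}(\I,f)$ coincides with $\cat{H}(\I,K)$ via the universal property of $k$; any comparison map $h$ from a test object in $\sHMod[S]$ factors as $u = \cat{H}(\I, k^\dag) \after h$, and $u$ is adjointable with adjoint $h^\dag \after \cat{H}(\I,k)$ since $h$ was.

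For monoidality when $\I$ is simple, I would take the unit constraint to be the identity $S \to \cat{H}(\I,\I)$ and define $\varphi_{X,Y} : \cat{H}(\I,X) \tensor_H \cat{H}(\I,Y) \to \cat{H}(\I, X \tensor Y)$ on representatives by $[x \tensor y]_\sim \mapsto (x \tensor y) \after \lambda^{-1}$. Well-definedness on the quotient $\tensor_H$ is the only subtle point; it relies on multiplicative cancellation in $S$, which by Lemma~\ref{lem:semiring} is precisely where simplicity of $\I$ enters. Coherence and naturality then follow from the functoriality of $\tensor$ in $\cat{H}$. Finally, faithfulness when $\I$ is a generator is immediate: $\cat{H}(\I,f) = \cat{H}(\I,g)$ unpacks to $f \after x = g \after x$ for all $x : \I \to X$, hence $f = g$.
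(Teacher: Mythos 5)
Your overall route is the paper's: the same inner product $\inprod{x}{y}=x^\dag\after y$ on $\cat{H}(\I,X)$, the same check that $\cat{H}(\I,f)$ is adjointable with adjoint $\cat{H}(\I,f^\dag)$, the same treatment of biproducts, the same kernel argument (including the key point that the mediating map equals $\cat{H}(\I,k^\dag)\after h$ and is therefore adjointable), the same monoidal structure via $\I\cong\I\tensor\I$, and the same one-line faithfulness argument. Your direct proof of strictness is actually a small improvement in self-containedness: where the paper simply cites \cite[2.11]{vicary:complexnumbers} for $x^\dag x=0\Rightarrow x=0$, your dagger-kernel argument ($x$ factors through $k=\ker(x^\dag)$ as $x=k\after u$, and $u=k^\dag\after x=(x^\dag\after k)^\dag=0$) derives it from the axioms, and in fact works for arbitrary $x\colon Y\to X$.

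There is, however, one genuine gap: ``non-degeneracy follows by applying strictness to $x-y$'' presupposes additive inverses, and none are available at this stage. The hom-sets of a pre-Hilbert category are only commutative monoids, i.e.\ semimodules over the semiring $S=\cat{H}(\I,\I)$; subtraction only becomes available once $S$ is shown to be a field in Section~\ref{sec:scalarfield}, and that requires the \emph{additional} hypotheses that $\I$ is a simple generator, which Theorem~\ref{thm:phase1} does not assume. So the element $x-y$ need not exist, and the non-degeneracy condition of Definition~\ref{def:hilbertsemimodule} ($\inprod{m}{-}=\inprod{n}{-}\Rightarrow m=n$) must be established by other means; the paper does this by arguing directly that the power transpose $x\mapsto x^\dag\after(-)$ is a monomorphism. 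You need an argument of that kind here; the reduction of non-degeneracy to strictness does not go through in the semiring setting.
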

\begin{proof}
  We have to put an $S$-valued inner product on
  $\cat{H}(\I,X)$. Inspired by Lemma~\ref{lem:HModSXisX}, we define
  $\inprod{-}{-} \colon  \cat{H}(\I,X)^\ddag \tensor \cat{H}(\I,X) \to
  \cat{H}(\I,\I)$ by (linear extension of) $\inprod{x}{y} = x^\dag
  \after y$ for $x,y \in \cat{H}(\I,X)$. The Yoneda lemma shows that
  its power transpose $x \mapsto x^\dag \after (-)$ is a
  monomorphism. Thus $\cat{H}(\I,X)$ is a Hilbert $S$-semimodule.   
  A forteriori, \cite[2.11]{vicary:complexnumbers} shows that it is a
  strict Hilbert semimodule.

  Moreover, the image of a morphism $f\colon X \to Y$ of
  $\cat{H}$ under $\cat{H}(\I,-)$ is indeed a morphism of $\sHMod[S]$,
  that is, it is adjointable, since
  \[
       \inprod{f \after x}{y}_{\cat{H}(\I,Y)}
     = (f \after x)^\dag \after y 
     = x^\dag \after f^\dag \after y 
     = \inprod{x}{f^\dag \after y}_{\cat{H}(\I,X)}
  \]
  for $x \in \cat{H}(\I,X)$ and $y \in \cat{H}(\I,Y)$.
  This also shows that $\cat{H}(\I,-)$ preserves $\dag$. Also, by
  definition of product we have $\cat{H}(\I,X \oplus Y) \cong
  \cat{H}(\I,X) \oplus \cat{H}(\I,Y)$, so the functor $\cat{H}(\I,-)$
  preserves $\oplus$. 

  To show that $\cat{H}(\I,-)$ preserves kernels, suppose that
  $\smash{\xymatrix@1{k=\ker(f)\colon  K\; \ar@{ |>->}[r] & X}}$ is 
  a kernel of $f\colon  X \to Y$ in $\cat{H}$. We have to show that
  $\smash{\xymatrix@1{\cat{H}(\I,k) = k \after (-) \colon  \cat{H}(\I,K)\;
  \ar@{ |>->}[r] & \cat{H}(\I,X)}}$ is a kernel of $\cat{H}(\I,f) = f
  \after (-) \colon  \cat{H}(\I,X) \to \cat{H}(\I,Y)$ in
  $\sHMod[S]$.
  First of all, one indeed has $\cat{H}(\I,f) \after \cat{H}(\I,k) =
  \cat{H}(\I,f \after k) = 0$. Now suppose that $l \colon  Z \to
  \cat{H}(\I,X)$ also satisfies $\cat{H}(\I,f) \after l = 0$. That is,
  for all $z \in Z$, we have $f \after (l(z)) = 0$. Since $k$ is a
  kernel, for each $z \in Z$ there is a unique $m_z\colon  \I \to K$
  with $l(z) = k \after m_z$. Define a function $m\colon  Z \to
  \cat{H}(\I,K)$ by $m(z)=m_z$. This is a well-defined module
  morphism, since $l$ is; for example, 
  \[
      k \after m_{z+z'} 
    = l(z+z') 
    = l(z) + l(z') 
    = (k \after m_z) + (k \after m_{z'}) 
    = k \after (m_z + m_{z'}),
  \]
  so that $m(z+z')=m(z)+m(z')$ because $k$ is mono. In fact, $m$ is
  the unique module morphism satisfying $l = \cat{H}(\I,k) \after m$.
  Since $k$ is a dagger mono, we have $m = \cat{H}(\I,k^\dag) \after
  l$. So as a composition of adjointable module morphisms $m$ is a
  well-defined morphism of $\sHMod[S]$. Thus $\cat{H}(\I,k)$ is 
  indeed a kernel of $\cat{H}(\I,f)$, and $\cat{H}(\I,-)$ preserves kernels.

  If $\I$ is simple then $\sHMod[S]$ is monoidal. To show that
  $\cat{H}(\I,-)$ is a monoidal functor we must give a natural
  transformation $\varphi_{X,Y} \colon  \cat{H}(\I,X) \tensor \cat{H}(\I,Y)
  \to \cat{H}(\I, X \tensor Y)$ and a morphism $\psi \colon  S \to
  \cat{H}(\I,\I)$. Since $S=\cat{H}(\I,\I)$, we can simply take
  $\psi=\id$. Define $\varphi$ by mapping $x \tensor y$ for $x\colon \I \to
  X$ and $y\colon \I \to Y$ to 
  \[\xymatrix@1{
    \I \ar^-{\cong}[r] & \I \tensor \I \ar^-{x \tensor y}[r] & X \tensor Y.
  }\]
  It is easily seen that $\varphi$ and $\psi$ make the required
  coherence diagrams commute, and hence $\cat{H}(\I,-)$ is a monoidal
  functor. 
\end{proof}

\section{The scalar field}
\label{sec:scalarfield}

This section shows that the scalars in a pre-Hilbert category whose
monoidal unit is a simple generator necessarily form an involutive field. 
First, we need a factorisation result, which is interesting in its own right.

\begin{lemma} In any dagger category:
  \label{lem:dagmonoproperties} 
  \begin{enumerate}
  \renewcommand{\labelenumi}{\emph{(\alph{enumi})}}
    \item A dagger mono which is epic is a dagger isomorphism. 
    \item If both $gf$ and $f$ are dagger epic, so is $g$. 
    \item If $m$ and $n$ are dagger monic, and $f$ is an isomorphism with
          $nf=m$, then $f$ is a dagger isomorphism.
  \end{enumerate}
\end{lemma}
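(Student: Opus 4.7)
The plan is to unwind each claim directly from the definitions $m^\dag m = \id$ for a dagger mono and $e e^\dag = \id$ for a dagger epi, exploiting that $\dag$ is functorial so that $(gf)^\dag = f^\dag g^\dag$. All three parts should be short algebraic manipulations; the main subtlety is keeping track of which side cancellation acts on.

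For \emph{(a)}, I would start with a dagger mono $m$ (so $m^\dag m = \id$) that is also epic. The idea is to massage $mm^\dag$ into something cancellable. Write $(mm^\dag) m = m (m^\dag m) = m = \id \after m$, and then invoke the assumption that $m$ is epic (right-cancellable) to conclude $mm^\dag = \id$, which together with $m^\dag m = \id$ gives a dagger iso.

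For \emph{(b)}, the computation is even more immediate: using functoriality of $\dag$,
\[
  \id \;=\; (gf)(gf)^\dag \;=\; g f f^\dag g^\dag \;=\; g \after \id \after g^\dag \;=\; g g^\dag,
\]
so $g$ is dagger epic. No hypothesis other than $ff^\dag = \id$ and $(gf)(gf)^\dag = \id$ is used.

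For \emph{(c)}, given $nf = m$ with $m,n$ dagger monic and $f$ an ordinary iso, I would first show $f$ is a dagger mono by computing
\[
  f^\dag f \;=\; f^\dag (n^\dag n) f \;=\; (nf)^\dag(nf) \;=\; m^\dag m \;=\; \id.
\]
Since $f$ is already an isomorphism it is in particular epic, so part~\emph{(a)} immediately upgrades it to a dagger isomorphism. I anticipate no genuine obstacle; the only care needed is to write epi/mono cancellation on the correct side, which is what makes part~\emph{(a)} the load-bearing observation that part~\emph{(c)} reuses.
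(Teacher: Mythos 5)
Your proof is correct and follows essentially the same route as the paper: part (a) by right-cancelling $m$ in $mm^\dag m = m$, part (b) by the identical one-line computation, and part (c) by reducing to (a) after showing $f$ is dagger monic. The only cosmetic difference is that in (c) you verify $f^\dag f = \id$ by a direct computation, whereas the paper cites the dual of part (b) for the same fact.
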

\begin{proof}
  For (a), notice that $ff^\dag=\id$ implies $ff^\dag f=f$,
  from which $f^\dag f=\id$ follows from the assumption that $f$ is epi.
  For (b): $gg^\dag = gff^\dag g = gf (gf)^\dag = \id$.
  Finally, consider (c). If $f$ is isomorphism, in particular it is epi. If
  both $nf$ and $n$ are dagger mono, then so is $f$, by (b). Hence by
  (a), $f$ is dagger isomorphism.
\end{proof}


\begin{lemma}
  \label{lem:monokerzero}
  In any pre-Hilbert category, a morphism $m$ is mono iff $\ker(m)=0$.
\end{lemma}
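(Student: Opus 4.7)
The forward direction is routine: if $k \colon K \to X$ is the kernel of $m$, then $mk = 0 = m \after 0_{K,X}$ combined with monicity of $m$ gives $k = 0_{K,X}$; since kernels are themselves monos, the equation $k \after \id[K] = k = k \after 0$ forces $\id[K] = 0$, so $K$ must be a zero object.

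For the converse, assume $\ker(m) = 0$. The universal property of the kernel immediately yields the ``zero-cancellation'' statement: whenever $mh = 0$, the morphism $h$ factors through the zero object and is therefore itself zero. To promote this to full left-cancellation, suppose $f, g \colon Z \to X$ satisfy $mf = mg$ and form the dagger equaliser $e \colon E \to Z$ of $f$ and $g$, available by the pre-Hilbert axioms. Being a dagger mono, $e$ is, by the axiom that every dagger mono is a dagger kernel, of the form $e = \ker(c)$ for some $c \colon Z \to C$. The plan is to show $c = 0$: for then $e$ is an isomorphism, and post-composing $fe = ge$ with $e^{-1}$ yields $f = g$.

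To establish $c = 0$, I would exploit the orthogonal complement structure inherent in pre-Hilbert categories: the dagger idempotent $p = ee^\dag \colon Z \to Z$ has an orthogonal complement $p^\perp$, giving a dagger biproduct decomposition $Z \cong E \oplus E^\perp$ whose summands are the image and the kernel of $p$. The relation $fe = ge$ already says $f$ and $g$ agree on the $E$-summand; the hypothesis $mf = mg$, together with the zero-cancellation property derived from $\ker(m) = 0$, should force agreement on the $E^\perp$-summand as well, yielding $f = g$ on all of $Z$.

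The main obstacle is carrying out this cancellation in the semimodule-enriched setting without subtraction: pre-Hilbert categories are at this point only enriched over commutative monoids, so the classical Abelian-category trick of invoking $m(f - g) = 0$ is unavailable. I therefore expect the argument to rely delicately on the dagger kernel axiom and on the factorisation properties of daggers (as in Lemma~\ref{lem:dagmonoproperties}), in order to manipulate the orthogonal projection $p$ and its complement without ever subtracting.
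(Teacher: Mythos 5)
The forward direction is fine. The converse, however, has a genuine gap at exactly the point you flag as the ``main obstacle,'' and the route you sketch for closing it is circular. You reduce the problem to showing that $f$ and $g$ agree on the summand $E^\perp$ of a putative decomposition $Z \cong E \oplus E^\perp$, and you propose to get this from $mf = mg$ together with the zero-cancellation property ($mh = 0 \Rightarrow h = 0$). But restricting along $\kappa_{E^\perp}$ only gives $m(f\kappa_{E^\perp}) = m(g\kappa_{E^\perp})$, which is an instance of the very cancellation problem you are trying to solve -- zero-cancellation applies only when one of the two composites is already $0$, and without subtraction you cannot convert ``$mf' = mg'$'' into ``$m(\text{something}) = 0$.'' A second, smaller issue is that the decomposition $Z \cong E \oplus E^\perp$ with $\id[Z] = ee^\dag + (\text{complement})$ is itself not available at this stage of the development; establishing it would again require additive manipulations of the kind the lemma is meant to enable.

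The paper's proof avoids this by working on the \emph{codomain} side of the test pair. Given $mu = mv$, it forms the dagger \emph{coequaliser} $q$ of $u,v$; since $q$ is dagger epic it is a cokernel, say $q = \coker(w)$, and since $mu = mv$ the morphism $m$ factors as $m = nq$. Now the two presentations interact: $mw = nqw = n0 = 0$, so $w$ factors through $\ker(m) = 0$, giving $w = 0$; hence $q = \coker(0)$ is an isomorphism and $qu = qv$ yields $u = v$. The essential point your dual setup misses is that the (co)equaliser must sit \emph{between} the test pair and $m$, so that its (co)kernel presentation composes with $m$ and feeds into the hypothesis $\ker(m) = 0$. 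Your equaliser $e = \ker(c)$ lives on the domain side of $f,g$, and the morphism $c$ has no compositional relationship with $m$ whatsoever, which is why no amount of manipulation of $p = ee^\dag$ will let you conclude $c = 0$. I recommend you redo the converse with the dagger coequaliser as above; your instinct to invoke the ``dagger mono/epi is a (co)kernel'' axiom is the right one, it just needs to be applied to the coequaliser.
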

\begin{proof}
  Suppose $\ker(m)=0$. Let $u,v$ satisfy $mu=mv$. Put $q$ to be the
  dagger coequaliser of 
  $u$ and $v$. Since $q$ is dagger epic, $q=\coker(w)$ for some $w$.
  As $mu=mv$, $m$ factors through $q$ as $m=nq$.
  Then $mw=nqw=n0=0$, so $w$ factors through $\ker(m)$ as $w=\ker(m)
  \after p$ for some $p$. But since $\ker(m)=0$, $w=0$. So $q$ is a
  dagger isomorphism, and in particular mono. Hence, from $qu=qv$ follows
  $u=v$. Thus $m$ is mono.
  \[\xymatrix@C+3ex@R+3ex{
    \ar@{ |>->}|-{\ker(m)}[dr] & \ar@{-->}_-{p}[l] \ar^-{w}[d] \\
    \ar@<.5ex>^-{u}[r] \ar@<-.5ex>_-{v}[r] 
    & \ar^-{m}[r] \ar@{-|>}_-{q}[d] & \\
    & \ar@{-->}_-{n}[ur] 
  }\]
  Conversely, if $m$ is mono, $\ker(m)=0$ follows from $m \after \ker(m) = 0 =
  m \after 0$.
\end{proof}

\begin{lemma}
  \label{lem:factorisation}
  Any morphism in a pre-Hilbert category can be factored as a
  dagger epi followed by a mono. This factorisation is unique up to a
  unique dagger isomorphism. 
\end{lemma}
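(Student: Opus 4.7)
The plan is to build the factorisation explicitly from dagger kernels. Given $f\colon X \to Y$, set $k = \ker(f)\colon K \to X$, a dagger mono by the dagger-equaliser axiom. In any dagger category with dagger kernels one has $\coker(k) = \ker(k^\dag)^\dag$, so writing $n = \ker(k^\dag)\colon I \to X$ (again a dagger mono), the morphism $e := n^\dag\colon X \to I$ is a dagger epi serving as $\coker(k)$. Since $f \after k = 0$, the universal property of $e$ produces a unique $m\colon I \to Y$ with $f = m \after e$; this candidate $(e,m)$ is what I expect to verify.

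The crux is showing that $m$ is mono; by Lemma~\ref{lem:monokerzero} it suffices to show $\ker(m) = 0$. Given any $\ell\colon L \to I$ with $m \after \ell = 0$, I would consider $n \after \ell\colon L \to X$ and compute $f \after n \after \ell = m \after n^\dag \after n \after \ell = m \after \ell = 0$, using $n^\dag \after n = \id$ since $n$ is dagger mono. Hence $n \after \ell$ factors through $k$ as $n \after \ell = k \after h$ for a unique $h$. Post-composing with $k^\dag$ gives $h = k^\dag \after n \after \ell$, but the ``orthogonality'' identity $k^\dag \after n = (n^\dag \after k)^\dag = (e \after k)^\dag = 0$ forces $h = 0$. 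Therefore $n \after \ell = 0$, and since $n$ is mono, $\ell = 0$. The delicate point, and the main obstacle, is precisely the orthogonality $k^\dag \after n = 0$ between the two dagger kernels: it encodes the Hilbert-space intuition that $\ker f$ and its ``orthogonal complement'' inside $X$ intersect only in zero, and without it the argument collapses.

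For uniqueness, suppose $f = m' \after e'$ is any other factorisation with $e'$ dagger epi and $m'$ mono. I would define $\varphi = e' \after e^\dag\colon I \to I'$. Using $e \after e^\dag = \id$, one gets $m' \after \varphi = m' \after e' \after e^\dag = m \after e \after e^\dag = m$; and from $m' \after \varphi \after e = m \after e = m' \after e'$ together with $m'$ mono it follows that $\varphi \after e = e'$. A symmetric construction with $\psi = e \after (e')^\dag$ supplies a two-sided inverse, so $\varphi$ is an isomorphism, and its uniqueness is forced by $m'$ being mono. The dagger property then comes for free: $\varphi^\dag = (e' \after e^\dag)^\dag = e \after (e')^\dag = \psi = \varphi^{-1}$, so $\varphi$ is in fact a dagger isomorphism, completing the statement.
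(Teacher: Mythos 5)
Your proof is correct, and while it shares the paper's skeleton (set $k=\ker(f)$, $e=\coker(k)$, factor $f=me$, and reduce mono-ness of $m$ to $\ker(m)=0$ via Lemma~\ref{lem:monokerzero}), the crucial step is argued by a genuinely different route. The paper runs a cokernel chase: given $g$ with $mg=0$ it introduces $q=\coker(g)$, notes that $qe$ is a dagger epi and hence a cokernel $\coker(h)$, factors $h$ through $k$, and extracts a retraction of $q$ to conclude $g=0$. You instead exploit the dagger directly by presenting $e$ as $n^\dag$ for the dagger mono $n=\ker(k^\dag)$; then the splitting $n^\dag\after n=\id$ and the orthogonality $k^\dag\after n=(e\after k)^\dag=0$ dispose of any $\ell$ with $m\after\ell=0$ in three lines. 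This is the categorical form of the orthogonal-complement argument: it is shorter and more conceptual, at the cost of committing to the specific representation $\coker(k)=\ker(k^\dag)^\dag$ (which is legitimate here, since dagger equalisers make every kernel a dagger mono); the paper's version uses the equivalent fact that dagger epis are cokernels without naming a section of $e$. For uniqueness the paper appeals to Lemma~\ref{lem:dagmonoproperties}(c) in dual form, whereas you build the comparison $\varphi=e'\after e^\dag$ explicitly, check $\varphi\after e=e'$, $m'\after\varphi=m$, and $\varphi^\dag=\varphi^{-1}$ by hand; this is self-contained and equally valid. One cosmetic caveat: your $I$ for the image object collides with the paper's use of $\I$ for the monoidal unit, so it would be better to write $\Im(f)$ or similar.
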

\begin{proof} 
  Let a morphism $f$ be given.
  Put $k=\ker(f)$ and $e=\coker(k)$.
  Since $fk=0$ (as $k=\ker(f)$), $f$ factors through $e$($=\coker(k)$)
  as $f=me$.
  \[\xymatrix@C+3ex@R+3ex{
    & \ar^-{h}[d] \ar_-{l}[dl] \\
    \ar@{ |>->}_-{k}[r] & \ar@{-|>}_-{e}[d] \ar^-{f}[r] & \\
    \ar_-{g}[r] & \ar_-{m}[ur] \ar@{-|>}@<-.5ex>_-{q}[r] 
    & \ar_-{r}[u] \ar@<-.25ex>_-{s}[l]
  }\]
  We have to show that $m$ is mono.
  Let $g$ be such that $mg=0$. By Lemma~\ref{lem:monokerzero} 
  it suffices to show that $g=0$.
  Since $mg=0$, $m$ factors through $q=\coker(g)$ as $m=rq$.
  Now $qe$ is a dagger epi, being the composite of two dagger epis.
  So $qe=\coker(h)$ for some $h$.
  Since $fh=rqeh=r0=0$, $h$ factors through $k$($=\ker(f)$) as
  $h=kl$. 
  Finally $eh=ekl=0l=0$, so $e$ factors through $qe=\coker(h)$ as
  $q=sqe$. 
  But since $e$ is (dagger) epic, this means $sq=\id$, whence $q$ is
  mono.
  It follows from $qg=0$ that $g=0$, and the factorisation is
  established. 
  Finally, by Lemma~\ref{lem:dagmonoproperties}(c), the factorisation is
  unique up to a dagger isomorphism.
\end{proof}

We just showed that any Hilbert category has a factorisation system
consisting of monos and dagger epis. Equivalently, it has a factorisation
system of epis and dagger monos. Indeed, if we can factor $f^\dag$ as an
dagger epi followed by a mono, then taking the daggers of those, we
find that $f^{\dag\dag}=f$ factors as an epi followed by a dagger mono.
The combination of both factorisations yields that every morphism can
be written as a dagger epi, followed by a monic epimorphism, followed
by a dagger mono; this can be thought of as a generalisation of
\emph{polar decomposition}. 

Recall that a \emph{semifield} is a
commutative semiring in which every nonzero element has a
multiplicative inverse. Notice that the scalars in the embedding
theorem for Abelian categories do not necessarily have multiplicative
inverses. 

\begin{lemma}
\label{lem:scalardivision}
  If $\cat{H}$ is a nontrivial pre-Hilbert category with simple monoidal
  unit $\I$, then $S=\cat{H}(\I,\I)$ is a semifield.
\end{lemma}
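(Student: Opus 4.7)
The plan is to combine the factorisation system of Lemma~\ref{lem:factorisation} with the simplicity of $\I$ to produce an explicit inverse for every nonzero scalar. Since Lemma~\ref{lem:semiring} already tells us that $S$ is a commutative, multiplicatively cancellative, involutive semiring, the only thing left to exhibit is multiplicative inverses for nonzero elements.

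I would take a nonzero scalar $s \colon \I \to \I$ and apply Lemma~\ref{lem:factorisation} to factor it as $s = m \after e$, with $e \colon \I \to X$ a dagger epi and $m \colon X \to \I$ a mono. Because $s \neq 0$, the mono $m$ cannot be zero either (otherwise the whole composite collapses). Since $\I$ is simple, $\Sub(\I) = \{0,\I\}$, and the zero subobject is only represented by the zero morphism; thus $m$ must represent the top subobject $\I$, which forces $m$ to be an isomorphism.

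Now I would set $t = e^\dag \after m^{-1} \colon \I \to \I$ and compute, using $e \after e^\dag = \id[X]$,
\[
  s \after t \;=\; m \after e \after e^\dag \after m^{-1} \;=\; m \after \id[X] \after m^{-1} \;=\; \id[\I].
\]
Because $S = \cat{H}(\I,\I)$ is a commutative monoid under composition (Lemma~\ref{lem:semiring}), this one-sided identity $s \after t = \id[\I]$ is automatically two-sided: $t \after s = s \after t = \id[\I]$. Hence every nonzero element of $S$ is invertible, and $S$ is a semifield.

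The only step that requires a moment's thought is the identification of a nonzero mono into $\I$ with an isomorphism, which is essentially just unpacking the definition of simplicity; everything else is a short factorisation manipulation. I therefore do not expect a genuine obstacle, provided one is comfortable reading ``$\Sub(\I) = \{0,\I\}$'' as saying that every nonzero mono into $\I$ is an isomorphism.
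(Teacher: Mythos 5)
Your proof is correct, and its core --- factorise $s$ via Lemma~\ref{lem:factorisation} and use simplicity of $\I$ to force the mono part to be either zero or invertible --- is exactly the paper's. The two arguments part ways only in how they finish. The paper factors $s=m\after e$ the other way around, as an epi $e$ followed by a \emph{dagger} mono $m$; after concluding from simplicity that $m$ is an isomorphism, it observes that $s$ is then epic, hence $s^\dag$ is a mono into $\I$, and applies simplicity a \emph{second} time to $s^\dag$ to conclude that $s$ is an isomorphism. You instead keep the dagger on the epi side and exploit $e\after e^\dag=\id$ to write down the explicit inverse $t=e^\dag\after m^{-1}$, then use commutativity of the scalar monoid (from Lemma~\ref{lem:semiring}) to upgrade the one-sided inverse to a two-sided one. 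Your version needs only one appeal to simplicity and produces the inverse concretely, at the small cost of invoking commutativity of scalars, which the paper's version does not need here. Both are sound, and your reading of $\Sub(\I)=\{0,\I\}$ --- a nonzero mono into $\I$ represents the top subobject and is therefore an isomorphism, since a mono representing the zero subobject factors through the zero object and is itself zero --- is the intended one.
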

\begin{proof}
  We will show that $S$ is a semifield by proving that any $s \in S$
  is either zero or isomorphism. Factorise $s$ as $s=me$ for a dagger mono
  $\xymatrix@1{m\colon \Im(s)\;\ar@{ |>->}[r] & \I}$ and an epi $e\colon \I
  \twoheadrightarrow \Im(s)$. Since $\I$ is simple, either $m$ is zero or
  $m$ is isomorphism. If $m=0$ then $s=0$. If $m$ is isomorphism, then
  $s$ is epi, so 
  $s^\dag$ is mono. Again, either $s^\dag=0$, in which case $s=0$, or
  $s^\dag$ is isomorphism. In this last case $s$ is also isomorphism.
\end{proof}

The following lemma shows that every scalar also has an additive
inverse. This is always the case for the scalars in the embedding
theorem for Abelian categories, but the usual proof of this fact is
denied to us because epic monomorphisms are not necessarily
isomorphisms in a pre-Hilbert category (see
Appendix~\ref{sec:thecategoryHilb}). 

\begin{lemma}
\label{lem:scalarsubtraction}
  If $\cat{H}$ is a nontrivial pre-Hilbert category whose monoidal
  unit $\I$ is a simple generator, then $S=\cat{H}(\I,\I)$ is a field.
\end{lemma}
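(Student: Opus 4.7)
The plan is to leverage Lemma~\ref{lem:scalardivision}, which already gives that $S$ is a commutative semifield, and upgrade this to a field by producing additive inverses. It suffices to find a scalar $c \in S$ with $1 + c = 0$, for then $s + sc = s(1+c) = 0$ shows that $sc$ is an additive inverse of any $s \in S$.

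The candidate $c$ will come from the codiagonal $\nabla\colon \I \oplus \I \to \I$. Let $k\colon K \to \I \oplus \I$ be its dagger kernel, which exists because $\cat{H}$ has dagger equalisers. First I would establish that $K \neq 0$. If instead $K = 0$, then $\ker(\nabla) = 0$ and Lemma~\ref{lem:monokerzero} makes $\nabla$ monic; but $\nabla \kappa_1 = \id[\I] = \nabla \kappa_2$ would then force $\kappa_1 = \kappa_2$, and composing with $\pi_1$ would give $\id[\I] = 0$, contradicting the nontriviality of $\cat{H}$.

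Since $\I$ is a generator, $K \neq 0$ yields a nonzero morphism $h\colon \I \to K$. Put $a = \pi_1 k h$ and $b = \pi_2 k h$ in $S$. The biproduct identity $\id[\I \oplus \I] = \kappa_1 \pi_1 + \kappa_2 \pi_2$ together with $\nabla \kappa_i = \id[\I]$ yields $a + b = \nabla k h = 0$; and $(a,b) \neq (0,0)$, because otherwise $k h = \kappa_1 a + \kappa_2 b$ would vanish, forcing $h = 0$ (as $k$ is monic). Without loss of generality $a \neq 0$, hence invertible in the semifield $S$, and $c := a^{-1} b$ then satisfies $1 + c = a^{-1}(a + b) = 0$ as desired.

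The only real obstacle is the verification $K \neq 0$, where the nontriviality of $\cat{H}$ enters essentially via the biproduct equations; once that is in hand, the rest is bookkeeping with the biproduct identities and arithmetic already available in the semifield $S$.
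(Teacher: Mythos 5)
Your proof is correct, and it takes a genuinely different route from the paper's. The paper first invokes a dichotomy from Golan (\cite[4.34]{golan:semirings}): a semifield is either zerosumfree or a field. It then rules out zerosumfreeness by contradiction, showing that zerosumfreeness plus the generator property would force $\ker(\nabla)=0$, hence $\nabla$ mono by Lemma~\ref{lem:monokerzero}, hence $\kappa_1=\kappa_2$, which is absurd. You instead establish unconditionally that $\ker(\nabla)\neq 0$ (essentially the same $\kappa_1=\kappa_2$ argument, run in the contrapositive), use the generator hypothesis to extract a nonzero point $h\colon \I\to K$ of the kernel (if every such point were zero, $\id[K]=0$ and $K\cong 0$ --- you rightly flag that this is where the generator assumption enters), and read off from the two components of $k\after h$ a nontrivial relation $a+b=0$ in $S$; division in the semifield of Lemma~\ref{lem:scalardivision} then produces $-1$, hence all additive inverses. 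Both arguments pivot on the same object, the kernel of the codiagonal, and on Lemma~\ref{lem:monokerzero}, but yours replaces the appeal to the external structure theorem for semifields by an explicit construction of $-1$; this is more self-contained and arguably more informative, since it exhibits the witness for subtraction rather than deducing its existence from a classification. The bookkeeping steps you defer ($\id[\I\oplus\I]=\kappa_1\pi_1+\kappa_2\pi_2$, bilinearity of composition) are covered by the $\SMod[S]$-enrichment the paper establishes via \cite{heunen:semimoduleenrichment}, so there is no gap there.
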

\begin{proof}
  Applying~\cite[4.34]{golan:semirings} to the previous lemma yields
  that $S$ is either zerosumfree, or a field. Assume, towards a
  contradiction, that $S$ is zerosumfree. We will show that the kernel
  of the codiagonal $\nabla = [\id,\id] \colon  \I \oplus \I \to \I$ is zero.
  Suppose $\nabla \after \langle x,y \rangle = x+y = 0$ for $x,y\colon X \to
  \I$. Then for all $z\colon \I \to X$ we have $\nabla \after \langle x,y
  \rangle \after z = 0 \after z = 0$, \ie $xz+yz = 0$. Since $S$ is
  assumed zerosumfree hence $xz=yz=0$, so $\langle x,y\rangle \after z
  = 0$. Because $\I$ is a generator then $\langle x,y \rangle = 0$.
  Thus $\ker(\nabla)=0$. But then, by Lemma~\ref{lem:monokerzero},
  $\nabla$ is mono, whence $\kappa_1 = \kappa_2$, which is a
  contradiction. 
\end{proof}

Collecting the previous results about the scalars in a pre-Hilbert
category yields Theorem~\ref{thm:scalars} below. It uses a well-known
characterisation of subfields of the complex numbers, that we recall
in the following two lemmas. 

\begin{lemma}
\label{lem:embeddinginalgebraicclosure}
  \cite[Theorem~4.4]{grillet:abstractalgebra}
  Any field of characteristic zero and at most continuum cardinality
  can be embedded in an algebraically closed field of characteristic
  zero and continuum cardinality.
  \qed
\end{lemma}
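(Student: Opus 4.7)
The plan is to embed $F$ into the complex field $\field{C}$ itself, since $\field{C}$ is an algebraically closed field of characteristic zero and continuum cardinality. The argument proceeds via transcendence bases.

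First, since $F$ has characteristic zero, it contains (an isomorphic copy of) the rationals $\field{Q}$. Choose a transcendence basis $T \subseteq F$ of $F$ over $\field{Q}$ using Zorn's lemma, so that $F$ is algebraic over the purely transcendental extension $\field{Q}(T)$. The cardinality of $T$ is at most $|F|$, hence at most the continuum.

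Next, recall that $\field{C}$ has transcendence degree exactly the continuum over $\field{Q}$; fix a transcendence basis $B \subseteq \field{C}$. Since $|T| \leq |B|$, pick any injection $\iota\colon T \hookrightarrow B$. Because the elements of $B$ are algebraically independent over $\field{Q}$, the image $\iota(T)$ is also algebraically independent, so $\iota$ extends uniquely to a field embedding $\bar\iota\colon \field{Q}(T) \hookrightarrow \field{Q}(B) \subseteq \field{C}$.

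Finally, I extend $\bar\iota$ to all of $F$. Since $F$ is algebraic over $\field{Q}(T)$ and $\field{C}$ is algebraically closed, the standard extension theorem for embeddings into an algebraically closed field (proved by Zorn's lemma applied to intermediate partial extensions, splitting minimal polynomials one element at a time) supplies an embedding $F \hookrightarrow \field{C}$ extending $\bar\iota$. This is the desired embedding.

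The only step requiring real care is the choice of injection $T \hookrightarrow B$: one must know that $\field{C}$ has transcendence degree equal to the continuum over $\field{Q}$, which follows from a cardinality count (any finitely generated extension of $\field{Q}$ is countable, so if the transcendence degree were smaller than the continuum, $\field{C}$ itself would be of smaller cardinality). Everything else is a routine application of the theory of algebraic extensions, and the result cited from Grillet packages this argument.
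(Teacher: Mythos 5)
Your proof is correct: the transcendence-basis argument (characteristic zero gives $\field{Q}\subseteq F$, a transcendence basis $T$ of $F$ over $\field{Q}$ has cardinality at most the continuum, $\field{C}$ has transcendence degree exactly the continuum over $\field{Q}$ by the cardinality count you sketch, and the resulting embedding $\field{Q}(T)\hookrightarrow\field{C}$ extends over the algebraic extension $F/\field{Q}(T)$ into the algebraically closed target) is precisely the standard proof. The paper offers no proof of its own here---it simply cites Grillet and marks the lemma as proved---so there is nothing to compare against beyond noting that your argument is the one that citation packages, and that by landing directly in $\field{C}$ it even subsumes the subsequent uniqueness lemma for the paper's purposes.
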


\begin{lemma}
\label{lem:algebraicallyclosedfieldscategorical}
  \cite[Proposition~1.4.10]{changkeisler:modeltheory}
  All algebraically closed fields of characteristic zero and
  continuum cardinality are isomorphic.
  \qed
\end{lemma}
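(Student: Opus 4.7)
The plan is to reduce to Steinitz's classical theorem: any two algebraically closed fields with the same characteristic and the same transcendence degree over the prime subfield are isomorphic. Given two algebraically closed fields $K_1$ and $K_2$ of characteristic zero and continuum cardinality, the first step is a cardinality count. Each $K_i$ contains $\field{Q}$ as its prime subfield and admits a transcendence basis $B_i$ over $\field{Q}$, so that $K_i$ is an algebraic closure of the purely transcendental extension $\field{Q}(B_i)$. Since algebraic extensions of an infinite field do not alter cardinality, $|K_i| = \max(|B_i|, \aleph_0)$, and continuum cardinality on the left forces $|B_1|$ and $|B_2|$ to be the continuum as well. Fix any bijection $\sigma \colon B_1 \to B_2$.

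Next I would promote $\sigma$ to a field isomorphism $\field{Q}(B_1) \cong \field{Q}(B_2)$ using the universal property of purely transcendental extensions (polynomial identities transfer along $\sigma$ because elements of $B_i$ are algebraically independent), and then lift this isomorphism to the algebraic closures to obtain $K_1 \cong K_2$. The lift relies on the standard extension lemma: any isomorphism of base fields extends to an isomorphism of any chosen pair of algebraic closures.

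The main obstacle is this last extension step, which is genuinely non-constructive. The plan there is a Zorn's lemma argument on the poset of partial field isomorphisms between intermediate algebraic extensions of $\field{Q}(B_1)$ inside $K_1$ and of $\field{Q}(B_2)$ inside $K_2$ that restrict to the isomorphism already constructed on $\field{Q}(B_1)$. Chains have obvious unions as upper bounds, so a maximal element exists; maximality then forces the domain to be all of $K_1$ and the image to be all of $K_2$. Otherwise one could adjoin an algebraic element $\alpha \in K_1$ outside the current domain and match it with any root in $K_2$ of its transferred minimal polynomial, yielding a proper extension. That each such polynomial actually has a root in $K_2$ is precisely where algebraic closedness of $K_2$ gets used.

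An alternative, more in the spirit of the Chang--Keisler reference, is the model-theoretic route: the first-order theory of algebraically closed fields of characteristic zero is $\kappa$-categorical for every uncountable cardinal $\kappa$, by the \L{}os--Vaught test, and this specialises to the lemma at $\kappa$ equal to the continuum. The categoricity proof itself is essentially the transcendence-basis argument above, rephrased in model-theoretic language, so the two approaches are not really independent.
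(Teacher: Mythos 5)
Your proof is correct; the paper offers no argument of its own here, simply citing Chang--Keisler, and the transcendence-degree/Steinitz argument you give (equivalently, the uncountable categoricity of $\mathrm{ACF}_0$ via the \L{}os--Vaught test) is exactly the standard proof behind that citation. The only point worth making explicit in your Zorn step is surjectivity: maximality gives that the domain is all of $K_1$, and then the image is an algebraically closed subfield of $K_2$ containing $\field{Q}(B_2)$, over which $K_2$ is algebraic, so the image is all of $K_2$.
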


\begin{theorem}
\label{thm:scalars}
  If $\cat{H}$ is a nontrivial pre-Hilbert category whose monoidal
  unit $\I$ is a simple generator, then $S=\cat{H}(\I,\I)$ is an
  involutive field of characteristic zero of at most continuum
  cardinality, with $S^+$ zerosumfree.

  Consequently, there is a monomorphism $\cat{H}(\I,\I)
  \rightarrowtail \field{C}$ of fields. However, it does not
  necessarily preserve the involution. 
\end{theorem}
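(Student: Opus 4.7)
The first sentence is essentially a compilation of earlier results, together with two ingredients: the cardinality bound and characteristic zero. The plan is to collect what we already have and then supply these two additional facts, after which the second sentence follows from the two cited lemmas about algebraically closed fields.

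First, I would invoke Lemma~\ref{lem:scalarsubtraction} directly to get that $S$ is a field. The involution on $S$ and the zerosumfreeness of $S^+$ come from Lemma~\ref{lem:semiring}. The cardinality claim $|S|\le 2^{\aleph_0}$ is immediate from the definition of a simple monoidal unit, which demands precisely that $\cat{H}(\I,\I)$ have at most continuum cardinality; so there is nothing to prove here.

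The one genuinely new piece is characteristic zero. My plan is to derive this from zerosumfreeness of $S^+$ together with nontriviality, as follows. The identity $1\in S$ lies in $S^+$, because $1 = 1^\ddag\cdot 1$. Nontriviality of $\cat{H}$ forces $1\neq 0$ in $S$. Now suppose, towards a contradiction, that $S$ has positive characteristic $n$, so that $\underbrace{1+\cdots+1}_{n}=0$ in $S$. Each summand is positive, and a straightforward induction using zerosumfreeness of $S^+$ (if $a,b\ge 0$ and $a+b=0$, then $a=b=0$) shows that every summand vanishes. Hence $1=0$, contradicting nontriviality. Thus $S$ has characteristic zero.

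For the second sentence, Lemma~\ref{lem:embeddinginalgebraicclosure} applied to the field $S$ (now known to have characteristic zero and cardinality at most continuum) yields a field embedding of $S$ into some algebraically closed field $K$ of characteristic zero and continuum cardinality. By Lemma~\ref{lem:algebraicallyclosedfieldscategorical}, any such $K$ is isomorphic to $\field{C}$. Composing gives the required monomorphism $\cat{H}(\I,\I)\rightarrowtail\field{C}$ of fields. Because this construction uses only the underlying field structure of $S$ and does not refer to $\ddag$ at any point, there is no reason for the embedding to intertwine $\ddag$ with complex conjugation; the remark that the involution need not be preserved is then simply a caveat, not a separate claim to be proved. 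I expect no significant obstacle beyond assembling the ingredients cleanly; the only subtlety worth highlighting is the short zerosumfreeness argument for characteristic zero, since all the other pieces are already in place.
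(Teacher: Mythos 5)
Your reduction of the theorem to earlier lemmas is the same as the paper's, and the cardinality remark, the field structure via Lemma~\ref{lem:scalarsubtraction}, and the final embedding via Lemmas~\ref{lem:embeddinginalgebraicclosure} and~\ref{lem:algebraicallyclosedfieldscategorical} all match. The gap is in your characteristic-zero argument. Your induction needs the partial sums $\underbrace{1+\cdots+1}_{k}$ to lie in $S^+$ so that zerosumfreeness (a statement about \emph{pairs of positive elements}) can be applied at each step; but $S^+$ is by definition the set of elements of the form $t^\ddag t$ with $t\colon\I\to\I$, and this set need not be closed under addition. This failure occurs in genuine pre-Hilbert categories: for inner product spaces over $\field{Q}$ with trivial involution, $S^+$ is the set of squares of rationals, and $1+1=2$ is not a square. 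Worse, your three ingredients (involutive field, $S^+$ zerosumfree, $1\neq 0$) genuinely do not imply characteristic zero: $\field{F}_3$ with the trivial involution has $S^+=\{0,1\}$, which is zerosumfree, yet the characteristic is $3$. So no argument from those hypotheses alone can close the gap.

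The paper's proof uses a strictly stronger positivity fact, namely \cite[2.11]{vicary:complexnumbers}: $x^\dag x=0$ implies $x=0$ for every $x\colon\I\to X$ with \emph{arbitrary} codomain $X$. One writes $n\cdot 1=\nabla^n\after\Delta^n=(\Delta^n)^\dag\after\Delta^n$ where $\Delta^n\colon\I\to\bigoplus_{i=1}^n\I$ is the $n$-fold diagonal; since $\pi_1\after\Delta^n=\id\neq 0$ in a nontrivial category, $\Delta^n\neq 0$, hence $n\cdot 1\neq 0$. Because $S$ is a field, $n\cdot s=(n\cdot 1)s=0$ then forces $s=0$. The moral is that the relevant notion of positivity for this step is ``of the form $x^\dag x$ for $x\colon\I\to X$,'' which captures $n\cdot 1$, rather than membership in $S^+$, which does not; your proof would be repaired by replacing the zerosumfreeness induction with this single application of Vicary's lemma.
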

\begin{proof}
  To establish characteristic zero, we have to prove that for all
  scalars $s \colon \I \to \I$ 
  the property $s+\cdots+s=0$ implies $s=0$, where the sum contains
  $n$ copies of $s$, for all $n \in \{1,2,3,\ldots\}$.. So suppose
  that $s + \cdots + s = 0$. 
  By definition, $s+ \cdots + s = \nabla^n \after (s \oplus \cdots
  \oplus s) \after \Delta^n = \nabla^n \after \Delta^n \after s$,
  where $\nabla^n = [\id]_{i=1}^n\colon  \bigoplus_{i=1}^n \I \to \I$ and 
  $\Delta^n = \langle\id\rangle_{i=1}^n \colon  \I \to
  \bigoplus_{i=1}^n \I$ are the $n$-fold (co)diagonals. But $0 \neq
  \nabla^n \after \Delta^n = (\Delta^n)^\dag \after \Delta^n$ by Lemma
  2.11 of \cite{vicary:complexnumbers}, which states that $x^\dag x=0$
  implies $x=0$ for every $x \colon \I \to X$. Since $S$ is a field by
  Lemma~\ref{lem:scalarsubtraction}, this means that $s=0$. 
\end{proof}

This theorem is of interest to reconstruction programmes,
that try to derive major results of quantum theory from simpler
mathematical assumptions, for among the things to be reconstructed are 
the scalars. For example, \cite{soler} shows that if an orthomodular
pre-Hilbert space is infinite dimensional, then the base field is
either $\field{R}$ or $\field{C}$, and the space is a Hilbert space. 

With a scalar field, we can sharpen the preservation of finite
biproducts and kernels of Theorem~\ref{thm:phase1} to preservation of
all finite limits. Since $\cat{H}(\I,-)$ preserves the dagger, it hence
also preserves all finite colimits. (In other terms:
$\cat{H}(\I,-)$ is exact.) 

\begin{corollary}
\label{col:diagramchasing}
  The functor $\cat{H}(\I,-) \colon  \cat{H} \to \sHMod[\cat{H}(\I,\I)]$
  preserves all finite limits and all finite colimits, for any
  pre-Hilbert category $\cat{H}$ whose monoidal unit $\I$ is a simple
  generator.
\end{corollary}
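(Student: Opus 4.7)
The plan is to decompose finite limits into the terminal object, binary products, and equalisers. The terminal object coincides with the empty biproduct and binary products with binary biproducts; both are preserved by Theorem~\ref{thm:phase1}. Hence the only genuinely new work is preservation of equalisers.

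For equalisers, the key is the additive structure that Theorem~\ref{thm:scalars} unlocks. Since $S = \cat{H}(\I,\I)$ is now a field, the $\SMod[S]$-enrichment of $\cat{H}$ is in fact a module enrichment: every hom-set is an abelian group, so additive inverses of morphisms are available. I would then invoke the familiar formula that the equaliser of $f,g \colon X \to Y$ equals $\ker(f-g)$, whose verification is immediate: $h$ satisfies $fh = gh$ iff $(f-g)h = 0$ iff $h$ factors through $\ker(f-g)$. Because $\cat{H}(\I,-)$ is additive and preserves kernels by Theorem~\ref{thm:phase1}, it sends $\ker(f-g)$ to $\ker(\cat{H}(\I,f) - \cat{H}(\I,g))$, which by the same formula applied in $\sHMod[S]$ is the equaliser of $\cat{H}(\I,f)$ and $\cat{H}(\I,g)$.

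For finite colimits, I would dualise through the dagger. Theorem~\ref{thm:phase1} ensures that $\cat{H}(\I,-)$ preserves $\dag$, and in any dagger category with kernels and equalisers, cokernels and coequalisers can be described as daggers of kernels and equalisers of the daggered morphisms: $\coker(f) = \ker(f^\dag)^\dag$, and analogously for coequalisers. Preservation of $\dag$ together with preservation of kernels and equalisers therefore yields preservation of cokernels and coequalisers. Combined with preservation of biproducts, which already subsume finite coproducts and the initial object, this delivers all finite colimits.

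The principal obstacle is the passage from semimodule enrichment to genuine module enrichment: the equaliser-as-kernel-of-difference formula is simply unavailable until additive inverses of morphisms exist, so the field structure furnished by Theorem~\ref{thm:scalars} is essential here rather than decorative. Once additive inverses are in hand, the remaining steps are routine formal manipulations with additivity, kernels, and daggers, with no further categorical input needed.
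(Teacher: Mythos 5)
Your proposal is correct and follows essentially the same route as the paper: the paper likewise notes that $\cat{H}(\I,-)$ is an additive ($\Cat{Ab}$-)functor once the scalars form a field, reduces equalisers to kernels via $\mathrm{eq}(f,g)=\ker(f-g)$, combines this with preservation of finite products from Theorem~\ref{thm:phase1}, and obtains the colimit half by preservation of the dagger. Your explicit remark that the additive inverses furnished by Theorem~\ref{thm:scalars} are what make the kernel-of-difference formula available is exactly the point the paper is making implicitly by placing the corollary after the scalar-field section.
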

\begin{proof}
  One easily checks that $F=\cat{H}(\I,-)$ is an $\Cat{Ab}$-functor, \ie
  that $(f+g) \after (-) = (f \after -) + (g \after
  -)$~\cite{heunen:semimoduleenrichment}. %
  Hence, $F$ preserves equalisers:
  \[
      F(\mathrm{eq}(f,g))
    = F(\ker(f-g))
    = \ker(F(f-g))
    = \ker(Ff-Fg)
    = \mathrm{eq}(Ff,Fg).
  \]
  Since we already know from Theorem~\ref{thm:phase1} that $F$
  preserves finite products, we can conclude that it preserves all 
  finite limits. And because $F$ preserves the self-duality $\dag$, it
  also preserves all finite colimits.
\end{proof}

\section{Extension of scalars}
\label{sec:Extensionofscalars}

The main idea underlying this section is to exploit
Theorem~\ref{thm:scalars}. We will construct a functor $\HMod[R] \to
\HMod[S]$ given a morphism $R \to S$ of commutative involutive
semirings, and apply it to the above $\cat{H}(\I,\I) \to
\field{C}$. This is called \emph{extension of scalars}, and is well
known in the setting of modules (see \eg
\cite[10.8.8]{ash:abstractalgebra}). Let us first consider in some
more detail the construction on semimodules.  

Let $R$ and $S$ be commutative semirings, and $f\colon R \to S$ a homomorphism of
semirings. Then any $S$-semimodule $M$ can be considered an
$R$-semimodule $M_R$ by defining scalar multiplication $r\cdot m$ in
$M_R$ in terms of scalar multiplication of $M$ by $f(r) \cdot m$.
In particular, we can regard $S$ as an $R$-semimodule. 
Hence it makes sense to look at $S \tensor_R M$. Somewhat more
precisely, we can view $S$ as a left-$S$-right-$R$-bisemimodule, and $M$
as a left-$R$-semimodule. Hence $S \tensor_R M$ becomes a
left-$S$-semimodule (see~\cite{golan:semirings}). This construction
induces a functor $f^* \colon  \SMod[R] \to \SMod[S]$, acting on morphisms
$g$ as $\id \tensor_R g$. It is easily seen to be strong monoidal and to
preserve biproducts and kernels. 

Now let us change to the setting where $R$ and $S$ are involutive
semirings, $f\colon R \to S$ is a morphism of involutive semirings, and we
consider Hilbert semimodules instead of semimodules. The next theorem
shows that this construction lifts to a functor $f^*\colon \sHMod[R]
\to \sHMod[S]$ (under some conditions on $S$ and
$f$). Moreover, the fact 
that any $S$-semimodule can be seen as an $R$-semimodule via $f$
immediately induces another functor $f_* \colon  \SMod[S] \to
\SMod[R]$. This one is called \emph{restriction of scalars}. In fact,
$f_*$ is right adjoint to $f^*$~\cite[vol 1, 3.1.6e]{borceux}. 
However, since we do not know
how to fashion an sesquilinear $R$-valued form out of an $S$-valued
one in general, it seems impossible to construct an adjoint functor
$f_*\colon \sHMod[S] \to \sHMod[R]$. 

\begin{theorem}
\label{thm:phase2}
  Let $R$ be a commutative involutive semiring, $S$ be a
  multiplicatively cancellative commutative involutive ring, and $f\colon R
  \rightarrowtail S$ be a monomorphism of involutive semirings. There
  is a faithful functor $f^* \colon  \sHMod[R] \to \sHMod[S]$ that preserves
  $\dag$. 
  If $R$ is multiplicatively cancellative, then $f^*$ is strong
  monoidal. If both $R^+$ and $S^+$ are zerosumfree, then $f^*$ also
  preserves $\oplus$. 
\end{theorem}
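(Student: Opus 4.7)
The plan is to build $f^*$ by extension of scalars at the semimodule level, and then transfer the inner product. Concretely, for a strict Hilbert $R$-semimodule $M$, set $f^*(M) = S \tensor_R M$ (an $S$-semimodule via left multiplication on the first factor), and define an $S$-valued pairing on simple tensors by
\[
  \inprod{s \tensor m}{t \tensor n}_{f^*(M)} \;=\; s^\ddag \cdot t \cdot f(\inprod{m}{n}_M),
\]
extended sesquilinearly. The first technical step is to check that this pairing descends to $S \tensor_R M$: on the defining relation $s \tensor (rm) = s f(r) \tensor m$ both sides agree because $f$ preserves involution and products, and $\inprod{-}{-}_M$ is $R$-sesquilinear. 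On morphisms, put $f^*(g) = \id_S \tensor g$; this is $S$-semilinear and will be shown adjointable with $f^*(g)^\dag = \id_S \tensor g^\dag$, giving preservation of $\dag$.

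Next I would verify the three Hilbert axioms. Conjugate-symmetry is direct. For positivity, write $\inprod{m}{m}_M = r^\ddag r$ (as $M$ is Hilbert) so that $\inprod{s \tensor m}{s \tensor m} = (s f(r))^\ddag(s f(r)) \in S^+$; cross terms and sums are handled by bilinearly expanding and noting $S^+$ is closed under addition. Non-degeneracy is the first main obstacle: given $\inprod{x}{-}_{f^*(M)} = \inprod{y}{-}_{f^*(M)}$, test against elements of the form $1 \tensor m'$, expand $x,y$ as sums of simple tensors, and use injectivity of $f$ to pull the identity back to a statement about $\inprod{-}{-}_M$, at which point non-degeneracy of $M$ suffices. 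Strictness uses that $S$ is multiplicatively cancellative together with strictness of $M$: on a simple tensor $\inprod{s \tensor m}{s \tensor m} = 0$ forces $s = 0$ or $\inprod{m}{m}_M = 0$ hence $m = 0$, and the argument extends to sums by the same kind of bilinear bookkeeping.

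For faithfulness, suppose $g \ne h \colon M \to N$ in $\sHMod[R]$. Pick $m \in M$ with $g(m) \ne h(m)$; by non-degeneracy of $\inprod{-}{-}_N$ there is $n$ distinguishing them, and then $\inprod{1 \tensor g(m)}{1 \tensor n}_{f^*(N)} = f(\inprod{g(m)}{n}_N)$ differs from the analogous expression for $h$ because $f$ is monic. Hence $f^*(g) \ne f^*(h)$.

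Finally, for strong monoidality (when $R$ is multiplicatively cancellative, so the tensor of Hilbert semimodules descends to strict ones) I would use the standard $S$-semimodule isomorphism $S \tensor_R (M \tensor_R N) \cong (S \tensor_R M) \tensor_S (S \tensor_R N)$ sending $s \tensor (m \tensor n) \mapsto (s \tensor m) \tensor (1 \tensor n)$, and check by direct computation on simple tensors that it preserves the inner products defined above, hence is a dagger iso in $\sHMod[S]$; checking the monoidal coherence diagrams is then routine since they already commute in $\SMod[S]$. For preservation of $\oplus$ under the zerosumfree hypotheses (which ensure that biproducts exist in $\sHMod[R]$ and $\sHMod[S]$ by the earlier proposition), the standard iso $S \tensor_R (M_1 \oplus M_2) \cong f^*(M_1) \oplus f^*(M_2)$ respects the biproduct inner product from equation~\eqref{eq:inprodbiprod} by a one-line check. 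The principal obstacle throughout is the bookkeeping for non-degeneracy and strictness on sums of simple tensors, where the absence of additive inverses prevents simply working with $x - y$.
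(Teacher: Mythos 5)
Your construction of $f^*$ on objects and morphisms, the strictness check on simple tensors, the faithfulness argument, and the treatment of $\tensor$ and $\oplus$ all match the paper's proof. The genuine gap is in non-degeneracy. You propose to test $\inprod{x}{-}_{f^*M} = \inprod{y}{-}_{f^*M}$ against elements $1 \tensor m'$ and then ``use injectivity of $f$ to pull the identity back'' to $M$. But a general element of $S \tensor_R M$ is $\sum_i s_i \tensor m_i$ with coefficients $s_i$ ranging over all of $S$, so the resulting identity $\sum_i s_i^\ddag f(\inprod{m_i}{m'}_M) = \sum_j t_j^\ddag f(\inprod{n_j}{m'}_M)$ mixes arbitrary elements of $S$ with elements of $f(R)$; injectivity of $f$ gives you no way to descend this to a statement about $\inprod{-}{-}_M$, and non-degeneracy of $M$ is not obviously applicable. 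So as written this step does not go through.

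You also flag ``the absence of additive inverses prevents simply working with $x-y$'' as the principal obstacle --- but the theorem explicitly assumes $S$ is a \emph{ring} (not merely a semiring), and that hypothesis is there precisely to make $x-y$ available. The paper's proof takes exactly that route: having shown $\inprod{z}{z}_{f^*M}=0 \Rightarrow z=0$ (using multiplicative cancellativity of $S$, injectivity of $f$, and strictness of $M$), it deduces non-degeneracy by forming $x-y$ in $f^*M$ and noting $\inprod{x-y}{x-y}_{f^*M}=0$, hence $x=y$. You should use the ring structure of $S$ here rather than try to avoid it. (A residual point, present in both your writeup and the paper's, is that the zero-norm-implies-zero argument is carried out only on simple tensors, whereas $x-y$ is a general sum of simple tensors; your appeal to ``bilinear bookkeeping'' does not by itself close that, since the cross terms $s_i^\ddag s_j f(\inprod{m_i}{m_j}_M)$ are not individually controlled. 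The same caveat applies to your positivity argument: the off-diagonal terms of the Gram expansion are not positive, so ``$S^+$ closed under addition'' does not settle $\inprod{x}{x}_{f^*M} \geq 0$ for non-simple $x$.)
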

\begin{proof}
  Let $M$ be a strict Hilbert $R$-semimodule. Defining the carrier of
  $f^*M$ to be $S \tensor_R M$ turns it into an $S$-semimodule as 
  before. Furnish it with
  \[
    \inprod{s \tensor m}{s' \tensor m'}_{f^*M} = s^\ddag \cdot s'
    \cdot f(\inprod{m}{m'}_M).
  \]
  Assume $0=\inprod{s \tensor m}{s \tensor m}_{f^*M} = s^\ddag s
  f(\inprod{m}{m}_M)$. Since $S$ is multiplicatively cancellative,
  either $s=0$ or $f(\inprod{m}{m}_M)=0$. In the former case
  immediately $s \tensor m=0$. In the latter case $\inprod{m}{m}_M=0$
  since $f$ is injective, and because $M$ is strict $m=0$, whence $s
  \tensor m=0$. Since $S$ is a ring, this implies that $f^*M$ is a
  strict Hilbert $S$-semimodule. For if $\inprod{x}{-}_{f^*M} =
  \inprod{y}{-}_{f^*M}$ then $\inprod{x-y}{-}_{f^*M} = 0$, so in
  particular $\inprod{x-y}{x-y}_{f^*M}=0$. Hence $x-y=0$ and $x=y$.
  
  Moreover, the image of a morphism $g\colon M \to M'$ of $\sHMod[R]$ under
  $f^*$ is a morphism of $\sHMod[S]$, as its adjoint is $\id \tensor
  g^\dag$: 
  \begin{align*}
        \inprod{(\id \tensor g)(s \tensor m)}{s' \tensor m'}_{f^*M'}
    & = \inprod{s \tensor g(m)}{s' \tensor m'}_{f^*M'} \\
    & = s^\ddag s' f(\inprod{g(m)}{m'}_M') \\
    & = s^\ddag s' f(\inprod{m}{g^\dag(m')}_M) \\
    & = \inprod{s \tensor m}{s' \tensor g^\dag(m')}_{f^*M} \\
    & = \inprod{s \tensor m}{(\id \tensor g^\dag)(s' \tensor m')}_{f^*M}.
  \end{align*}
  Obviously, $f^*$ is faithful, and preserves $\dag$.
  If dagger biproducts are available, then $f^*$ preserves them, since 
  biproducts distribute over tensor products.
  If dagger tensor products are available, showing that $f^*$
  preserves them comes down to giving an isomorphism
  $S \to S \tensor_R R$ and a natural isomorphism 
  $(S \tensor_R X) \tensor_S (S \tensor_R Y)
  \to S \tensor_R (X \tensor_R Y)$. The obvious candidates for
  these satisfy the coherence diagrams, making $f^*$ strong monoidal.
\end{proof}

\begin{corollary}
\label{cor:phase2}
  Let $S$ be an involutive field of characteristic zero and at most
  continuum cardinality. Then there is a strong monoidal, faithful
  functor $\sHMod[S] \to \sHMod[\field{C}]$ that preserves all finite
  limits and finite colimits, and preserves $\dag$ up to an
  isomorphism of the base field.
\end{corollary}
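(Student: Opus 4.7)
The strategy is to apply Theorem~\ref{thm:phase2} after producing a monomorphism of involutive semirings from $S$ into an involutive field whose underlying plain field is $\field{C}$, and then to transfer back to $\sHMod[\field{C}]$ along the underlying field isomorphism.

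First, applying Lemma~\ref{lem:embeddinginalgebraicclosure} followed by Lemma~\ref{lem:algebraicallyclosedfieldscategorical} to the plain field underlying $S$ yields a field monomorphism $f \colon S \rightarrowtail \field{C}$. This map ignores the involution on $S$, and the resulting potential failure to preserve $\ddag$ is the main obstacle. I would repair it by transporting $\ddag$ along $f$ to an involution $\star$ on the subfield $f(S) \subseteq \field{C}$ via $f(s)^\star = f(s^\ddag)$, and then extending $\star$ to an involution on all of $\field{C}$ by a standard Zorn's lemma argument: involutions lift across algebraic extensions by Galois theory, and across purely transcendental extensions by acting as the identity on a chosen transcendence basis. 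Writing $\field{C}_\star$ for $\field{C}$ with this extended involution, the map $f \colon (S,\ddag) \rightarrowtail \field{C}_\star$ is now a monomorphism of involutive semirings.

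Next I would invoke Theorem~\ref{thm:phase2} with this $f$: both $S$ and $\field{C}_\star$ are fields, hence multiplicatively cancellative; and their positive cones are zerosumfree (automatic for a scalar field in the sense of Theorem~\ref{thm:scalars}, and for $\field{C}_\star$ arrangeable by choosing the extension so that $\star$ is non-trivial, whereupon its fixed field is real-closed of index two by Artin-Schreier, and $(\field{C}_\star)^+$ sits inside a cone of squares in that real-closed subfield). This supplies a faithful, strong monoidal functor $f^* \colon \sHMod[S] \to \sHMod[\field{C}_\star]$ preserving $\oplus$ and $\dag$.

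Finally I would compose with the identification $\sHMod[\field{C}_\star] \cong \sHMod[\field{C}]$ along the identity on the underlying plain field $\field{C}$: the $\field{C}$-semimodules, adjointable maps, biproducts and tensor products match literally on the two sides, but the dagger in $\sHMod[\field{C}_\star]$ is computed with $\star$ rather than standard complex conjugation. The field automorphism of $\field{C}$ relating these two involutions is precisely the content of the clause \emph{up to an isomorphism of the base field}. Preservation of all finite limits and colimits then follows exactly as in Corollary~\ref{col:diagramchasing}: the composite is additive and preserves $\dag$ and $\oplus$, so it preserves kernels of differences and hence all finite equalisers, giving all finite limits; applying $\dag$ upgrades this to all finite colimits.
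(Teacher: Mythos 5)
Your overall strategy---compose the field embedding supplied by Lemmas~\ref{lem:embeddinginalgebraicclosure} and~\ref{lem:algebraicallyclosedfieldscategorical} with the extension-of-scalars functor of Theorem~\ref{thm:phase2}---is exactly the paper's, whose own proof consists of little more than this observation plus a citation for exactness. But two of your steps have genuine problems. The first is the exactness argument at the end, which is circular. The schema of Corollary~\ref{col:diagramchasing} derives preservation of equalisers from additivity \emph{together with} preservation of kernels, and kernel preservation was supplied there by Theorem~\ref{thm:phase1}. Theorem~\ref{thm:phase2} gives you no such input for $f^*$: the phrase ``it preserves kernels of differences'' is precisely the claim that needs proof, and it is where the mathematical content sits. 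The paper discharges it by citing the module-theoretic fact that extension of scalars along a field extension is exact (every $S$-vector space is free, hence flat; equivalently, every short exact sequence of vector spaces splits and additive functors preserve split exact sequences). Some such input must be invoked explicitly; additivity plus preservation of $\dag$ and $\oplus$ alone does not yield it.

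The second problem is the involution-repair step, which fails in the generality of the corollary's hypotheses. An involution on a subfield of $\field{C}$ need not extend to an involution of $\field{C}$: take $S=\field{Q}(\sqrt{2})$ with $\sqrt{2}\mapsto-\sqrt{2}$. By Artin--Schreier, any non-trivial involution of $\field{C}$ has a real closed fixed field $F$ with $[\field{C}:F]=2$; since $2=1+1$ is a sum of squares it is a square in $F$, so $\sqrt{2}\in F$ is fixed---contradiction. Note that the corollary does \emph{not} assume $S^+$ zerosumfree (you quietly import that from Theorem~\ref{thm:scalars}, strengthening the hypotheses), so you cannot argue that the fixed field of $\ddag$ is formally real. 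Moreover, even when an extension $\star$ does exist, its fixed field need only be \emph{some} real closed field of continuum cardinality, not $\field{R}$, so there need be no automorphism of $\field{C}$ carrying $\star$ to complex conjugation; and an object of $\sHMod[\field{C}_\star]$ carries a $\star$-sesquilinear form, which is not literally an object of $\sHMod[\field{C}]$, so the two categories do not ``match literally'' with only the dagger differing. To be fair, the paper buries all of this under the phrase ``up to an isomorphism of the base field'' and never constructs an involution on $\field{C}$ either; but your proposal converts that vagueness into specific assertions that are false as stated, so the gap becomes visible rather than remaining hidden.
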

\begin{proof}
  The only claim that does not follow from previous results is the
  statement about preservation of finite (co)limits. This comes down
  to a calculation in the well-studied situation of module
  theory~\cite[Exercise~10.8.5]{ash:abstractalgebra}. 
\end{proof}

Note that the extension of scalars functor $f^*$ of the previous
theorem is full if and only if $f$ is a regular epimorphism, \ie iff
$f$ is surjective. To see this, consider the inclusion
$f\colon \field{N} \hookrightarrow \field{Z}$. This is obviously not
surjective. Now, $\SMod[\field{N}]$ can be identified with the
category $\Cat{cMon}$ of commutative monoids, and $\SMod[\field{Z}]$
can be identified with the category $\Cat{Ab}$ of Abelian
groups. Under this identification, $f^*\colon \Cat{cMon} \to \Cat{Ab}$ sends
an object $X \in \Cat{cMon}$ to $X \coprod X$, with inverses being
provided by swapping the two terms $X$. For a morphism $g$, $f^*(g)$
sends $(x,x')$ to $(gx,gx')$. Consider $h\colon X \coprod X \to X \coprod
X$, determined by $h(x,x')=(x',x)$. If $h=f^*(g)$ for some $g$, then
$(x',x) = h(x,x') = f^*g(x,x') = (gx,gx')$, so $gx=x'$ and $gx'=x$ for
all $x,x' \in X$. Hence $g$ must be constant, contradicting $h=f^*g$.
Hence $f^*$ is not full.

\section{Completion}
\label{sec:completion}

Up to now we have concerned ourselves with algebraic structure
only. To arrive at the category of Hilbert spaces and continuous
linear maps, some analysis comes into play.
Looking back at Definition~\ref{def:hilbertsemimodule}, we see that a
strict Hilbert $\field{C}$-semimodule is just a pre-Hilbert space, 
\ie a complex vector space with a positive definite sesquilinear form
on it. Any pre-Hilbert space $X$ can be completed to a Hilbert space
$\widehat{X}$ into which it densely
embeds~\cite[I.3]{reedsimon:functionalanalysis}.  

A morphism $g\colon X \to Y$ of $\sHMod[\field{C}]$ amounts to a 
linear transformation between pre-Hilbert spaces that has an adjoint. 
So $\sHMod[\field{C}]=\preHilb$. However, these morphisms need not
necessarily be bounded, and only bounded adjointable morphisms can be
extended to the completion $\widehat{X}$ of their
domain~\cite[I.7]{reedsimon:functionalanalysis}. Therefore, we impose
another axiom on the morphisms of $\cat{H}$ to ensure this. Basically,
we rephrase the usual definition of boundedness of a function between
Banach spaces for morphism between Hilbert semimodules. Recall from
Lemma~\ref{thm:scalars} that the scalars $S=\cat{H}(\I,\I)$ in a
pre-Hilbert category $\cat{H}$ are always an involutive field, with
$S^+$ zerosumfree. Hence $S$ is ordered by $r \leq s$ iff $r+p=s$ for
some $p \in S^+$. We use this ordering to define boundedness of a
morphism in $\cat{H}$, together with the norm induced by the canonical
bilinear form $\inprod{f}{g} = f^\dag \after g$.

\begin{definition}
  Let $\cat{H}$ be a symmetric dagger monoidal dagger category with
  dagger biproducts. 
  A scalar $M\colon \I \to \I$ is said to \emph{bound} a morphism
  $g\colon X \to Y$ when $x^\dag g^\dag g x \leq M^\dag x^\dag x M$
  for all $x\colon \I \to X$. A morphism is called \emph{bounded} when
  it has a bound.  
  A \emph{Hilbert category} is a pre-Hilbert category whose
  morphisms are bounded.
\end{definition}

In particular, a morphism $g\colon X \to Y$ in $\sHMod[S]$ is bounded when
there is an $M \in S$ satisfying $\inprod{g(x)}{g(x)} \leq M^\dag M 
\inprod{x}{x}$ for all $x \in X$.

Almost by definition, the functor $\cat{H}(\I,-)$ of
Theorem~\ref{thm:phase1} preserves boundedness of morphisms when
$\cat{H}$ is a Hilbert category. The following lemma shows that also
the extension of scalars of Theorem~\ref{thm:phase2} preserves
boundedness. It is noteworthy that a combinatorial condition
(boundedness) on the category $\cat{H}$ ensures an analytic property 
(continuity) of its image in $\sHMod[\field{C}]$, as we never even
assumed a topology on the scalar field, let alone assuming
completeness. 

\begin{lemma}
\label{lem:extensionofscalarsbounded}
  Let $f\colon R \to S$ be a morphism of involutive semirings.
  If $g\colon X \to Y$ is bounded in $\sHMod[R]$, then $f^*(g)$ is bounded in
  $\sHMod[S]$. 
\end{lemma}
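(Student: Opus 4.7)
The plan is to take $N = f(M) \in S$ as the candidate bound for $f^*(g)$, where $M$ is a given bound for $g$ in $\sHMod[R]$. I would then verify that, for every $z \in f^*(X) = S \tensor_R X$,
\[
  N^\ddag N \inprod{z}{z}_{f^*X} - \inprod{f^*(g)(z)}{f^*(g)(z)}_{f^*Y}
\]
lies in $S^+$, which is exactly what boundedness by $N$ amounts to.

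I would first dispatch the pure-tensor case $z = s \tensor x$. Using the formula $\inprod{s \tensor m}{s' \tensor m'}_{f^*M} = s^\ddag s' f(\inprod{m}{m'})$ from Theorem~\ref{thm:phase2}, the difference collapses to $s^\ddag s \cdot f(M^\ddag M \inprod{x}{x}_X - \inprod{g(x)}{g(x)}_Y)$. The argument of $f$ lies in $R^+$ by the bound on $g$; $f$ carries $R^+$ into $S^+$ because it respects $\ddag$ (namely $f(t^\ddag t) = f(t)^\ddag f(t)$); and $S^+$ is closed under multiplication in any commutative involutive semiring, since $(s^\ddag s)(t^\ddag t) = (st)^\ddag(st)$. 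So this case is routine bookkeeping.

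The hard case is a general sum $z = \sum_i s_i \tensor x_i$. Expanding the difference yields $\sum_{i,j} s_i^\ddag s_j f(c_{ij})$ with $c_{ij} = M^\ddag M \inprod{x_i}{x_j}_X - \inprod{g(x_i)}{g(x_j)}_Y$, and the hypothesis on $g$ only tells us that $\sum_{i,j} r_i^\ddag r_j c_{ij} \in R^+$ for coefficients $r_i \in R$ (by applying the bound to $x = \sum_k r_k x_k$). Lifting this Gram-matrix positivity from $R$-linear to $S$-linear combinations is the real obstacle, since nothing in the raw definition of boundedness witnesses positivity of ``off-diagonal'' combinations with scalars from $S$.

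My strategy to close this gap would be to upgrade the pointwise bound to an operator-level square root: exhibit $h \colon X \to Z$ in $\sHMod[R]$ with $M^\ddag M \id_X - g^\dag g = h^\dag h$. Once such an $h$ is in hand, $c_{ij} = \inprod{h(x_i)}{h(x_j)}_Z$, so the suspect sum rewrites as $\inprod{f^*(h)(z)}{f^*(h)(z)}_{f^*Z}$, which is manifestly in $S^+$ by the construction of the inner product on $f^*Z$ and functoriality of $f^*$. The technical burden is then producing $h$, for which I would look to the dagger epi followed by mono factorisation of Lemma~\ref{lem:factorisation} (a kind of polar decomposition) applied to the positive self-adjoint morphism $M^\ddag M \id_X - g^\dag g$, or, if that is inadequate at the full generality of involutive semirings, specialise to the situation actually needed for Corollary~\ref{cor:phase2} where $R$ is a field and a standard Cholesky-type decomposition of positive Gram matrices becomes available.
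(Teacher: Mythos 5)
Your pure-tensor computation is essentially the paper's proof verbatim: the paper first notes that $f$ preserves the canonical order (because $f(r+t^\ddag t)=f(r)+f(t)^\ddag f(t)$), then verifies $\inprod{f^*g(s\tensor x)}{f^*g(s\tensor x)}_{f^*Y}\leq f(M)^\ddag f(M)\inprod{s\tensor x}{s\tensor x}_{f^*X}$ exactly as you do, and then concludes the general case with the single sentence that ``elements of the form $s\tensor x$ form a basis for $f^*X$,'' hence the bound holds for all $z\in f^*X$. In other words, the cross-term obstacle you isolate --- that a quadratic inequality checked on generators of a semimodule does not automatically propagate to sums, since the hypothesis only yields Gram positivity $\sum_{i,j}r_i^\ddag r_j c_{ij}\in R^+$ for $R$-coefficients while the conclusion needs it for $S$-coefficients after applying $f$ --- is not addressed by the paper at all. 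You have correctly located the weakest point of the published argument.

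That said, your own proposal does not close the gap either; it names a strategy whose key ingredient is left unconstructed. Lemma~\ref{lem:factorisation} will not produce an $h$ with $h^\dag h = M^\ddag M\,\id[X] - g^\dag g$: it factors a \emph{given morphism} as a dagger epi followed by a mono, whereas you need to realise a positive \emph{operator} as $h^\dag h$, a genuinely different piece of structure that $\sHMod[R]$ need not supply --- even in $\preHilb$ over $\field{C}$, a positive operator on an incomplete space need not admit a square root defined on that space. The Cholesky fallback also fails at the stated generality: over an involutive field such as $\field{Q}$ with trivial involution the pivots require square roots the field need not contain, and, more fundamentally, $S^+$ as defined in the paper consists of elements of the form $t^\ddag t$ (not sums of such), so it need not be closed under addition, and an $LDL^\ddag$ decomposition would only exhibit the quantity as a sum of positives. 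If one restricts to the case actually needed for Corollary~\ref{cor:phase2}, where $S=\field{C}$ and $S^+=\field{R}_{\geq 0}$ is closed under addition, a direct argument exploiting the Hermitian symmetry $c_{ji}=c_{ij}^\ddag$ of the Gram matrix is more promising than manufacturing $h$; but as written, both your proof and the paper's stop short of a complete justification of the general-$z$ case.
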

\begin{proof}
  First, notice that $f\colon R \to S$ preserves the canonical order:
  if $r \leq r'$, say $r+t^\ddag t = r'$ for $r,r',t \in R$, 
  then $f(r)+f(t)^\dag f(t) = f(r+t^\dag t) = f(r')$, so $f(r) \leq
  f(r')$.

  Suppose $\inprod{g(x)}{g(x)}_Y \leq M^\ddag M \inprod{x}{x}_X$ for
  all $x \in X$ and some $M \in R$. 
  Then $f(\inprod{g(x)}{g(x)}_Y) \leq f(M^\ddag M \inprod{x}{x}_X) =
  f(M)^\ddag f(M) f(\inprod{x}{x}_X)$ for $x \in X$.
  Hence for $s \in S$:
  \begin{align*}
        \inprod{f^*g(s \tensor x)}{f^*g (s \tensor x)}_{f^* Y}
    & = \inprod{(\id \tensor g)(s \tensor x)}
               {(\id \tensor g)(s \tensor x)}_{f^* Y} \\
    & = \inprod{s \tensor g(x)}{s \tensor g(x)}_{f^* Y} \\
    & = s^\ddag s f(\inprod{g(x)}{g(x)}_Y) \\
    & \leq s^\ddag s f(M)^\ddag f(M) f(\inprod{x}{x}_X) \\
    & = f(M)^\ddag f(M) \inprod{s \tensor x}{s \tensor x}_{f^* X}.
  \end{align*}
  Because elements of the form $s \tensor x$ form a basis for $f^* X =
  S \tensor_R X$, we thus have 
  \[
         \inprod{f^*g(z)}{f^*g(z)}_{f^* Y} 
    \leq f(M)^\ddag f(M) \inprod{z}{z}_{f^* X}
  \]
  for all $z \in f^* X$. In other words: $f^* g$ is bounded (namely,
  by  $f(M)$).
\end{proof}

Combining this section with Theorems~\ref{thm:phase1}
and~\ref{thm:scalars}, Corollary~\ref{cor:phase2} and
Lemma~\ref{lem:extensionofscalarsbounded} now results in our main
theorem. Notice that the completion preserves biproducts and kernels
and thus equalisers, and so preserves all finite limits and colimits.

\begin{theorem}
\label{thm:main}
  Any locally small Hilbert category $\cat{H}$ whose monoidal unit is
  a simple generator has a monoidal embedding into the category
  $\Hilb$ of Hilbert spaces and continuous linear maps that preserves
  $\dag$ (up to an isomorphism of the base field) and all finite limits and
  finite colimits. 
\end{theorem}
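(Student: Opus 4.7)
The plan is to obtain the desired embedding as the composite $F = F_3 \after F_2 \after F_1$ of three functors that have essentially already been built in the preceding sections, and then to verify that each structural property claimed in the theorem is preserved by every factor.

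First, Theorem~\ref{thm:phase1} supplies $F_1 = \cat{H}(\I,-) \colon \cat{H} \to \sHMod[S]$ with $S = \cat{H}(\I,\I)$. Because $\I$ is a simple generator, $F_1$ is faithful, monoidal, and preserves $\dag$, biproducts, and kernels; Corollary~\ref{col:diagramchasing} then upgrades this to preservation of all finite limits and colimits. Moreover, since $\cat{H}$ is a Hilbert category, every morphism in $\cat{H}$ is bounded, and the defining inequality for a bound is phrased directly in terms of the scalars $x^\dag g^\dag g x$, which are exactly the inner product values in $\cat{H}(\I,X)$; hence $F_1$ preserves boundedness essentially by definition. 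Next, Theorem~\ref{thm:scalars} yields a field monomorphism $S \rightarrowtail \field{C}$, and Corollary~\ref{cor:phase2} converts it into a faithful, strong monoidal functor $F_2 \colon \sHMod[S] \to \sHMod[\field{C}]$ preserving all finite limits and colimits, and preserving $\dag$ up to the chosen isomorphism of base fields. Lemma~\ref{lem:extensionofscalarsbounded} further ensures that $F_2$ preserves bounded morphisms.

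Finally, $\sHMod[\field{C}]$ is by definition $\preHilb[\field{C}]$, and I would construct $F_3 \colon \preHilb[\field{C}] \to \Hilb$ by metric completion: each pre-Hilbert space $X$ embeds densely into its Hilbert completion $\widehat{X}$, and every bounded adjointable linear map extends uniquely to a bounded linear map of the completions, with adjoint the extension of the original adjoint. Functoriality and faithfulness follow from density; the dagger is preserved on the nose by uniqueness of extensions; finite biproducts coincide with Hilbert direct sums of the completions; kernels of bounded adjointable maps are closed subspaces and hence already complete; and the algebraic tensor product sits densely in the Hilbert tensor product, giving a (weakly) monoidal structure. Composing, $F = F_3 \after F_2 \after F_1$ is then the desired embedding: each factor is faithful, monoidal, preserves $\dag$ (the only base-field isomorphism coming from $F_2$), and preserves all finite limits and colimits.

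The main delicacy sits in the completion step: one must check that the algebraic notion of bound defined just before Lemma~\ref{lem:extensionofscalarsbounded} translates, once the base field is $\field{C}$, into the usual operator norm bound on $\preHilb[\field{C}]$, which is exactly what justifies extending a morphism of $\preHilb[\field{C}]$ to a continuous morphism of $\Hilb$. This is a short unpacking of definitions once the scalars are topologised, but it is the only point in the whole argument where the topology of $\field{C}$ really enters; the preceding sections deliberately avoided assuming any topology on the scalars at all.
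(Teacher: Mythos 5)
Your proposal follows essentially the same route as the paper: the embedding is the composite of $\cat{H}(\I,-)\colon\cat{H}\to\sHMod[S]$ (Theorem~\ref{thm:phase1} together with Corollary~\ref{col:diagramchasing}), extension of scalars along the field monomorphism $S\rightarrowtail\field{C}$ (Theorem~\ref{thm:scalars} and Corollary~\ref{cor:phase2}), and Cauchy completion of pre-Hilbert spaces, with boundedness (preserved by Lemma~\ref{lem:extensionofscalarsbounded}) guaranteeing that morphisms extend continuously to the completions. The only point you omit is the degenerate case where $\cat{H}$ is the trivial one-morphism category: there $S=\cat{H}(\I,\I)$ satisfies $0=1$ and is not a field, so Theorem~\ref{thm:scalars} and Corollary~\ref{cor:phase2} do not apply and your chain of functors cannot be formed; this is precisely the case to which the paper's written proof is devoted, and it is dispatched by sending everything to the zero-dimensional Hilbert space.
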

\begin{proof}
  The only thing left to prove is the case that $\cat{H}$ is trivial.
  But if $\cat{H}$ is a one-morphism Hilbert category, its one
  object must be the zero object, and its one morphism must be the
  zero morphism. Hence sending this to the zero-dimensional Hilbert
  space yields a faithful monoidal functor that preserves $\dag$ and
  $\oplus$, trivially preserving all (co)limits. 
\end{proof}

To finish, notice that the embedding of the Hilbert category
$\Cat{Hilb}$ into itself thus constructed is (isomorphic to)
the identity functor.

\section{Conclusion}
\label{sec:conclusion}

Let us conclude by discussing several further issues.

\subsection{Dimension}
\label{subsec:dimension}

The embedding of Theorem~\ref{thm:main} is strong monoidal (\ie
preserves $\tensor$) if the canonical (coherent) morphism is an
isomorphism 
\[
  \cat{H}(\I,X) \tensor \cat{H}(\I,Y) \cong \cat{H}(\I,X \tensor Y),
\]
where the tensor product in the left-hand side is that of (strict)
Hilbert semimodules. This is a quite natural restriction, as it
prevents degenerate cases like $\tensor=\oplus$.
Under this condition, the embedding preserves compact
objects~\cite{heunen:semimoduleenrichment}. This means that compact
objects correspond to finite-dimensional Hilbert spaces under the
embedding in question. Our embedding theorem also shows that every
Hilbert category embeds into a
C*-category~\cite{ghezlimaroberts:wstarcategories}. This relates to
representation theory. Compare \eg \cite{doplicherroberts:duality},
who establish a correspondence between a compact group and its
categories of finite-dimensional, continuous, unitary representations;
the latter category is characterised by axioms comparable to
those of pre-Hilbert categories, with moreover every object being
compact. 

Corollary~\ref{col:diagramchasing} opens the way to \emph{diagram
chasing} (see \eg \cite[vol 2, Section~1.9]{borceux}): to prove that a
diagram commutes in a pre-Hilbert category, it suffices to prove this
in pre-Hilbert spaces, where one has access to actual elements. 
As discussed above, when $\cat{H}$ is compact, and the embedding
$\cat{H} \to \Cat{\preHilb}$ is strong monoidal, then the embedding takes
values in the category of finite-dimensional pre-Hilbert spaces. The
latter coincides with the category of finite-dimensional Hilbert
spaces (since every finite-dimensional pre-Hilbert space is Cauchy
complete). This partly explains the main claim 
in~\cite{selinger:diagramchasing}, namely that an equation holds in 
all dagger traced symmetric monoidal categories if and only if it
holds in finite-dimensional Hilbert spaces. 

\subsection{Functor categories}

We have used the assumption that the monoidal unit is simple in an
essential way. But if $\cat{H}$ is a pre-Hilbert category whose
monoidal unit is simple, and $\cat{C}$ is any nontrivial small
category, then the functor category $[\cat{C},\cat{H}]$ is a
pre-Hilbert category, albeit one whose monoidal unit is not simple
anymore. Perhaps the embedding theorem can be extended to this
example. The conjecture would be that any pre-Hilbert category whose
monoidal unit is a generator (but not necessarily simple), embeds into
a functor category $[\cat{C},\preHilb]$ for some category
$\cat{C}$. This requires reconstructing $\cat{C}$ from $\Sub(\I)$.


Likewise, it would be preferable to be able to drop the condition that
the monoidal unit be a generator. To accomplish this, one would need
to find a dagger preserving embedding of a given pre-Hilbert category
into a pre-Hilbert category with a finite set of generators. In the
Abelian case, this can be done by moving from $\cat{C}$ to
$[\cat{C},\Cat{Ab}]$, in which $\coprod_{X \in \cat{C}} \cat{C}(X,-)$
is a generator. But in the setting of Hilbert categories there is no
analogon of $\Cat{Ab}$. Also, Hilbert categories tend not to have
infinite coproducts. 


\subsection{Topology}

Our axiomatisation allowed inner product spaces over $\field{Q}$
as a (pre-) Hilbert category. Additional axioms, enforcing the base
field to be (Cauchy) complete and hence (isomorphic to) the real or complex
numbers, could perhaps play a role in topologising the above to yield
an embedding into sheaves of Hilbert spaces. A forthcoming paper
studies subobjects in a (pre-)Hilbert category, showing that
quantum logic is just an incarnation of categorical logic. But this is
also interesting in relation to \cite{amemiyaaraki:completeness},
which shows that a pre-Hilbert space is complete if and only if its
lattice of closed subspaces is orthomodular.

\subsection{Fullness}

A natural question is under what conditions the embedding is full.
Imitating the answer for the embedding of Abelian categories, we can
only obtain the following partial result, since Hilbert categories
need not have infinite coproducts, as opposed to $\Cat{Ab}$. 
An object $X$ in a pre-Hilbert category $\cat{H}$ with monoidal unit
$\I$ is said to be \emph{finitely generated} when there is a dagger epi
$\xymatrix@1{\bigoplus_{i \in I} \I \ar@{-|>}[r] & \;X}$ for some
finite set $I$. 

\begin{theorem}
  The embedding of Theorem~\ref{thm:phase1} is full when every object
  in $\cat{H}$ is finitely generated. 
\end{theorem}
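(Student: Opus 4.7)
Write $F = \cat{H}(\I,-) \colon \cat{H} \to \sHMod[S]$ and fix $X,Y \in \cat{H}$ and a morphism $h \colon FX \to FY$ in $\sHMod[S]$; the goal is to produce $f \colon X \to Y$ in $\cat{H}$ with $Ff = h$. The plan is to reduce to the ``free'' case $X = \bigoplus_{i \in I}\I$ using the finite generation hypothesis, handle that case by hand, and then descend along the generating dagger epi.

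By assumption, $X$ is finitely generated, so there is a dagger epi $e \colon \bigoplus_{i \in I}\I \twoheadrightarrow X$ with $I$ finite. Since $F$ preserves $\oplus$, dagger, and sends $\I$ to $S$, the morphism $F(e)$ is a dagger epi $\bigoplus_{i \in I}S \twoheadrightarrow FY$ in $\sHMod[S]$. Consider the composite $h \after F(e) \colon \bigoplus_{i \in I}S \to FY$. By Lemma~\ref{lem:hmodissmodenriched} we have $\sHMod[S](S,FY) \cong FY = \cat{H}(\I,Y)$, and since $\sHMod[S](-,FY)$ turns the finite biproduct $\bigoplus_{i \in I}S$ into a product, the morphism $h \after F(e)$ is completely determined by the tuple of its components $y_i \colon \I \to Y$ in $\cat{H}$, obtained by precomposing with the injections. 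Form the copairing $g = [y_i]_{i \in I} \colon \bigoplus_{i \in I}\I \to Y$ in $\cat{H}$; because $F$ preserves biproducts, $F(g) = h \after F(e)$.

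It remains to factor $g$ through $e$. The axiom that every dagger mono is a dagger kernel, dualised by applying $\dag$, says that every dagger epi is a dagger cokernel; hence $e = \coker(k)$ for $k = \ker(e)$. Thus $g$ factors through $e$ provided $g \after k = 0$. But $F$ is faithful (since $\I$ is a generator, as noted in Theorem~\ref{thm:phase1}), so it suffices to verify $F(g) \after F(k) = 0$, and this is immediate:
\[
F(g) \after F(k) = h \after F(e) \after F(k) = h \after F(e \after k) = h \after 0 = 0.
\]
Therefore there exists $f \colon X \to Y$ in $\cat{H}$ with $g = f \after e$. Applying $F$ gives $F(f) \after F(e) = F(g) = h \after F(e)$; cancelling the epi $F(e)$ yields $F(f) = h$, as required.

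The main potential obstacle is the factorisation step: one needs $g$ to annihilate $\ker(e)$. This is exactly where the dagger-duality of the axiom ``every dagger mono is a dagger kernel'' enters, letting us treat $e$ as a cokernel. Combined with faithfulness of $F$ (which transports the vanishing back from $\sHMod[S]$), this step becomes formal; the rest of the argument is essentially the standard diagram chase used in Mitchell's embedding theorem, transplanted to the dagger setting.
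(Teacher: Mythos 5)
Your proof is correct and follows essentially the same route as the paper's: reduce to the free case $\bigoplus_{i\in I}\I$ via the finitely generating dagger epi, use $\sHMod[S](S,FY)\cong\cat{H}(\I,Y)$ to realise the components as morphisms $y_i\colon\I\to Y$, and descend through the dagger epi by invoking that dagger epis are cokernels. The only cosmetic differences are that you package the base case into the copairing $[y_i]_{i\in I}$ and use faithfulness of $F$ to get $g\after k=0$, where the paper verifies the analogous identities by direct computation with $\sum_i\kappa_i\after\pi_i=\id$ and $p\after p^\dag=\id$.
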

\begin{proof}
  We have to prove that $\cat{H}(\I,-)$'s action on
  morphisms, which we temporarily denote $T\colon \cat{H}(X,Y) \to
  \sHMod[S](\cat{H}(\I,X),\cat{H}(\I,Y))$, 
  is surjective when $X$ is finitely generated. Let $\Phi \colon 
  \cat{H}(\I,X) \to \cat{H}(\I,Y)$ in $\sHMod[S]$. We must find $\varphi
  \colon  X \to Y$ in $\cat{H}$ such that $\Phi(x) = \varphi \after x$ for
  all $x\colon \I \to X$ in $\cat{H}$. 
  Suppose first that $X=\I$. Then $\Phi(x) = \Phi(\id[\I] \after x) = 
  \Phi(\id[\I]) \after x$ since $\Phi$ is a morphism of
  $S$-semimodules. So $\varphi=\Phi(\id[\I])$ satisfies $\Phi(x) =
  \varphi \after x$ for all $x \colon  \I \to X$ in $\cat{H}$.
  In general, if $X$ is finitely generated, there is a finite set $I$
  and a dagger epi $\xymatrix@1{p \colon  \bigoplus_{i \in I} \I \ar@{-|>}[r]
    & \; X}$. Denote by $\Phi_i$ the composite morphism
  \[\xymatrix{
      \cat{H}(\I,\I) \ar^-{T(\kappa_i)}[r] 
    & \cat{H}(\I, \bigoplus_{i \in I} \I) \ar^-{T(p)}[r]
    & \cat{H}(\I,X) \ar^-{\Phi}[r]
    & \cat{H}(\I,Y) \quad \mbox{ in }\sHMod[S].
  }\]
  By the previous case ($X=\I$), for each $i \in I$ there is
  $\varphi_i \in \cat{H}(\I,Y)$ such that $\Phi_i(x) = \varphi_i \after
  x$ for all $x \in S$. Define $\bar{\varphi} = [\varphi_i]_{i \in I}
  \colon  \bigoplus_{i \in I} \I \to Y$, and $\bar{\Phi} = \Phi \after T(p) \colon 
  \cat{H}(\I, \bigoplus_{i \in I} \I) \to \cat{H}(\I,Y)$. Then, for $x \in
  \cat{H}(\I, \bigoplus_{i \in I} \I)$:
  \begin{align*}
        \bar{\Phi}(x) 
    & = \Phi(p \after x) 
      = \Phi(p \after (\sum_{i \in I} \kappa_i \after \pi_i) \after x) 
      = \sum_{i \in I} \Phi(p \after \kappa_i \after \pi_i \after x) \\
    & = \sum_{i \in I} \Phi_i(\pi_i \after x) 
      = \sum_{i \in I} \varphi_i \after \pi_i \after x 
      = \bar{\varphi} \after x.
  \end{align*}
  Since $p$ is a dagger epi, it is a cokernel, say $p=\coker(f)$.
  Now
  \[
      \bar{\varphi} \after f 
    = \bar{\Phi}(f)
    = \Phi(p \after f)
    = \Phi(0)
    = 0,
  \]
  so there is a (unique) $\varphi\colon  X \to Y$ with $\bar{\varphi} =
  \varphi \after p$. Finally, for $x\colon G \to X$,
  \[
      \Phi(x)
    = \Phi(p \after p^\dag \after x)
    = \bar{\Phi}(p^\dag \after x)
    = \bar{\varphi} \after p^\dag \after x
    = \varphi \after p \after p^\dag \after x
    = \varphi \after x.
  \]
\end{proof}

\appendix
\section{The category of Hilbert spaces}
\label{sec:thecategoryHilb}

We denote the category of Hilbert spaces and continuous linear
transformations by $\Hilb$. First, we show that $\Hilb$ is actually a  
Hilbert category. Subsequently, we prove that it is not an Abelian
category. 

First, there is a dagger in $\Hilb$, by the Riesz
representation theorem. The dagger of a morphism
$f\colon X \to Y$ is its adjoint, \ie the unique $f^\dag$ satisfying
\[
  \inprod{f(x)}{y}_Y = \inprod{x}{f^\dag(y)}_X.
\]

It is also well-known that $\Hilb$ has finite dagger biproducts:
$X \oplus Y$ is carried by the direct sum of the underlying vector
spaces, with inner product
\[
  \inprod{(x,y)}{(x',y')}_{X \oplus Y} = \inprod{x}{x'}_X + \inprod{y}{y'}_Y.
\]

Furthermore, $\Hilb$ has kernels: the kernel of $f\colon X \to Y$ is
(the inclusion of) $\{ x \in X \mid f(x)=0 \}$. Since $\ker(f)$ is in
fact a closed subspace, its inclusion is isometric. That is,
$\Hilb$ in fact has dagger kernels. Consequently $\ker(g-f)$ is
a dagger equaliser of $f$ and $g$ in $\Cat{Hilb}$.

We now turn to the requirement that every dagger mono be a dagger kernel.

\begin{lemma}
\label{lem:monos}
  The monomorphisms in $\Hilb$ are the injective continuous
  linear transformations.
\end{lemma}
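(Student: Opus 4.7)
The plan is to establish the two inclusions separately, noting that this is essentially a restatement of the fact that the one-dimensional Hilbert space serves as a generator.

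For the easy direction, I would argue that every injective continuous linear map $f \colon X \to Y$ is monic for the purely formal reason that $\Hilb$ is concrete over $\Cat{Set}$. Explicitly, given continuous linear $g, h \colon Z \to X$ with $f \after g = f \after h$, one has $f(g(z)) = f(h(z))$ for every $z \in Z$, and injectivity of $f$ yields $g(z) = h(z)$ pointwise, hence $g = h$.

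For the converse, I would exploit $\field{C}$ (the monoidal unit $\I$) as a separator: elements of any Hilbert space $X$ are in bijection with continuous linear maps $\field{C} \to X$, via $x \leftrightarrow (\lambda \mapsto \lambda x)$. So, suppose $f \colon X \to Y$ is monic and $f(x) = f(x')$ for some $x, x' \in X$. Define $g, h \colon \field{C} \to X$ by $g(\lambda) = \lambda x$ and $h(\lambda) = \lambda x'$; both are continuous linear (with operator norms $\|x\|$ and $\|x'\|$ respectively). By linearity, $f \after g$ and $f \after h$ are the continuous linear maps $\field{C} \to Y$ sending $1$ to the common value $f(x) = f(x')$, so they are equal. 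Monicity of $f$ then forces $g = h$, whence evaluating at $1$ gives $x = x'$. Thus $f$ is injective.

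No step presents a real obstacle; the only thing worth checking carefully is that the test maps $g, h \colon \field{C} \to X$ constructed from elements of $X$ indeed lie in $\Hilb$, which is immediate from the observation that scalar multiplication by a fixed vector is continuous linear.
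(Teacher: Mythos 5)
Your proof is correct and follows essentially the same route as the paper: the forward direction is the standard pointwise argument from injectivity, and the converse tests a monomorphism against the continuous linear maps $\field{C} \to X$ determined by $1 \mapsto x$ and $1 \mapsto x'$, exactly as in the paper's proof. Your added remark that these test maps are bounded (with operator norms $\|x\|$ and $\|x'\|$) just makes explicit what the paper leaves implicit.
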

\begin{proof}
  If $m$ is injective, then it is obviously mono. Conversely, suppose
  that $m\colon X \rightarrowtail Y$ is mono. Let  
  $x,x' \in X$ satisfy $m(x)=m(x')$. Define $f\colon \field{C} \to X$ by
  (continuous linear extension of) $f(1)=x$, and $g\colon \field{C} \to X$
  by (continuous linear extension of) $g(1)=x'$. Then $mf=mg$, whence
  $f=g$ and $x=x'$. Hence $m$ is injective. 
\end{proof}

Recall that Hilbert spaces have orthogonal projections, that is: 
if $X$ is a Hilbert space, and $U \subseteq X$ a closed subspace,
then every $x \in X$ can be written as $x=u+u'$ for unique $u \in U$
and $u' \in U^\perp$, where
\begin{equation}
\label{eq:orthogonalsubspace}
  U^\perp = \{ x \in X \mid \forall_{u \in U} . \inprod{u}{x}=0 \}.
\end{equation}
The function that assigns to $x$ the above unique $u$ is a morphism $X
\to U$, the \emph{orthogonal projection} of $X$ onto its closed
subspace $U$. 

\begin{proposition}
  \label{prop:dagmonosarekernels}
  In $Hilb$, every dagger mono is a dagger kernel.
\end{proposition}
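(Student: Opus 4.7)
The plan is to exhibit every dagger mono $m \colon X \to Y$ in $\Hilb$ as the kernel of the orthogonal projection onto the orthogonal complement of its image, and then to use the fact that $m^\dag m = \id_X$ to conclude that this kernel is a dagger kernel.

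First I would unpack what a dagger mono in $\Hilb$ means concretely: the equation $m^\dag m = \id_X$ combined with the defining property $\inprod{m(x)}{m(x')}_Y = \inprod{x}{m^\dag m(x')}_X$ shows that $m$ is an isometric embedding. The image $U = m(X)$ is therefore complete as a metric subspace of $Y$, hence closed in $Y$. This puts us in the setting of equation~(\ref{eq:orthogonalsubspace}), so the orthogonal projection $p \colon Y \to Y$ onto $U$ is well-defined, and a brief computation (using $m^\dag m = \id$ and $\ker(m^\dag) = U^\perp$) identifies it as $p = m m^\dag$.

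Next I would define the candidate morphism of which $m$ is supposed to be a kernel, namely the orthogonal projection onto $U^\perp$:
\[
  f \;=\; \id_Y - m m^\dag \colon Y \to Y.
\]
Then $f \after m = m - m m^\dag m = m - m = 0$. Conversely, if $g \colon Z \to Y$ satisfies $f \after g = 0$, then $g = m m^\dag g = m \after (m^\dag g)$, so $g$ factors through $m$; the factorisation is unique because $m$ is (in particular) monic, and indeed the factoring morphism must be $m^\dag g$ since $m^\dag \after m = \id$. Hence $m = \ker(f)$ in $\Hilb$.

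Finally, since the representative $m$ we have produced for this kernel already satisfies $m^\dag m = \id_X$, it is by definition a dagger mono, and so $m$ is a dagger kernel. The only step that genuinely uses analysis rather than pure algebra is the closedness of $m(X)$, which is where the completeness hypothesis on Hilbert spaces (as opposed to pre-Hilbert spaces) enters; everything else is a formal manipulation with the identity $m^\dag m = \id$. I do not anticipate serious obstacles, since the decomposition $Y = U \oplus U^\perp$ for closed $U$ is exactly the classical projection theorem in Hilbert spaces.
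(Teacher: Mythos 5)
Your proof is correct and takes essentially the same route as the paper: both identify the image of the dagger mono as a closed subspace and exhibit $m$ as the dagger kernel of the orthogonal projection onto its orthogonal complement. The only cosmetic difference is that the paper obtains closedness of the image by citing that split monos in $\Hilb$ have closed image, whereas you derive it directly from the isometry property forced by $m^\dag m = \id$, and you spell out the verification that $m = \ker(\id[Y] - mm^\dag)$ which the paper leaves implicit.
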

\begin{proof}
  Let $m\colon M \rightarrowtail X$ be a dagger mono. 
  In particular, $m$ is a split mono, and hence its image is
  closed~\cite[4.5.2]{aubin:functionalanalysis}.
  So, without loss of generality, we can assume that $m$ is the
  inclusion of a closed subspace $M \subseteq X$. 
  But then $m$ is the dagger kernel of the orthogonal projection of
  $X$ onto $M$.
\end{proof}

All in all, $\Hilb$ is a Hilbert category. So is its full
subcategory $\Cat{fdHilb}$ of finite-dimensional Hilbert categories.
Also, if $\cat{C}$ is a small category and $\cat{H}$ a
Hilbert category, then $[\cat{C}, \cat{H}]$ is again a Hilbert category.

Since $\Hilb$ has biproducts, kernels and cokernels, it
is a pre-Abelian category. But the behaviour of epis
prevents it from being an Abelian category.

\begin{lemma}
\label{lem:epis}
  The epimorphisms in $\Hilb$ are the continuous linear
  transformations with dense image.
\end{lemma}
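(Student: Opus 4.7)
The plan is to prove the two implications separately, using the standard trick that two continuous maps agreeing on a dense subset must coincide, and then exploiting orthogonal projections onto closed subspaces for the converse.

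For the forward direction, suppose $f\colon X \to Y$ has dense image in $Y$, and let $g,h\colon Y \to Z$ satisfy $g \after f = h \after f$. Then $g$ and $h$ agree on $\Im(f)$. Since $g$ and $h$ are continuous and $\Im(f)$ is dense in $Y$, they must agree on all of $Y$. Hence $f$ is epic. This direction is essentially a one-liner and poses no difficulty.

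For the converse, I would prove the contrapositive: if $\Im(f)$ is not dense, then $f$ is not epic. Let $U = \overline{\Im(f)}$, a proper closed subspace of $Y$. Using the orthogonal decomposition $Y = U \oplus U^\perp$ (with $U^\perp$ as in~\eqref{eq:orthogonalsubspace} and nontrivial since $U \neq Y$), form the orthogonal projection $p\colon Y \to Y$ onto $U$, as described just before Proposition~\ref{prop:dagmonosarekernels}. Since $\Im(f) \subseteq U$ and $p$ restricts to the identity on $U$, we have $p \after f = f = \id[Y] \after f$. But $p \neq \id[Y]$ because $p$ vanishes on the nonzero subspace $U^\perp$. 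This exhibits two distinct morphisms $Y \to Y$ that become equal after precomposing with $f$, so $f$ is not epic.

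The only mild subtlety is ensuring that the orthogonal projection $p$ is a well-defined, continuous linear map $Y \to Y$, so that it is actually a morphism of $\Hilb$; but this is immediate from the orthogonal decomposition of a Hilbert space along a closed subspace, which was already invoked in the proof of Proposition~\ref{prop:dagmonosarekernels}. There is no genuine obstacle here, and no appeal beyond basic Hilbert space geometry is needed.
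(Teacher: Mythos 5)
Your proof is correct and follows essentially the same route as the paper: continuity plus density for the easy direction, and for the converse, exhibiting two distinct continuous linear maps out of $Y$ that agree after precomposition with $f$. The only difference is cosmetic: the paper separates $\overline{\Im(f)}$ from $Y$ by comparing the quotient projection $Y \to Y/\overline{\Im(f)}$ with the zero map, whereas you compare the orthogonal projection onto $\overline{\Im(f)}$ with the identity on $Y$; both rest on the same orthogonal decomposition of a Hilbert space along a closed subspace.
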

\begin{proof}
  Let $e\colon X \to Y$ satisfy $\overline{e(X)}=Y$, and $f,g\colon Y \to Z$
  satisfy $fe=ge$. Let $y \in Y$, say $y = \lim_n e(x_n)$. Then
  \[
    f(y) = f(\lim_n e(x_n)) = \lim_n f(e(x_n)) = \lim_n g(e(x_n)) =
    g(\lim_n e(x_n)) = g(y).
  \]
  So $f=g$, whence $e$ is epi.
  
  Conversely, suppose that $e\colon X \twoheadrightarrow Y$ is epi. Then
  $\overline{e(X)}$ is a closed subspace of $Y$, so that $Y /
  \overline{e(X)}$ is again a Hilbert space, and the projection $p\colon Y
  \to Y / \overline{e(X)}$ is continuous and linear. Consider also
  $q\colon Y \to Y / \overline{e(X)}$ defined by $q(y)=0$. Then $pe=qe$, 
  whence $p=q$, and $\overline{e(X)}=Y$.
\end{proof}

From this, we can conclude that $\Hilb$ is not an Abelian
category, since it is not balanced: there are monic epimorphisms that
are not isomorphic. In other words, there are injections that have dense
image but are not surjective. For example, $f\colon \ell^2(\field{N}) \to
\ell^2(\field{N})$ defined by $f(e_n) = \frac{1}{n}e_n$ is not
surjective, as $\sum_n \frac{1}{n}e_n$ is not in its range. But it is
injective, self-adjoint, and hence also has dense image.

Another way to see that $\Hilb$ is not an Abelian category is to
assert that the inclusion of a nonclosed subspace is mono, but 
cannot be a kernel since these are closed.

\end{document}